\pdfoutput=1
\RequirePackage{ifpdf}
\ifpdf % We are running pdfTeX in pdf mode
\documentclass[pdftex]{sigma}
\else
\documentclass{sigma}
\fi

\numberwithin{equation}{section}

\newtheorem{Theorem}{Theorem}[section]
\newtheorem{Corollary}[Theorem]{Corollary}
\newtheorem{Lemma}[Theorem]{Lemma}
\newtheorem{Proposition}[Theorem]{Proposition}
 { \theoremstyle{definition}
\newtheorem{Definition}[Theorem]{Definition}
\newtheorem{Note}[Theorem]{Note}
}

\begin{document}

\newcommand{\arXivNumber}{1801.06083}

\renewcommand{\thefootnote}{}

\renewcommand{\PaperNumber}{044}

\FirstPageHeading

\ShortArticleName{The $q$-Onsager Algebra and the Universal Askey--Wilson Algebra}

\ArticleName{The $\boldsymbol{q}$-Onsager Algebra \\ and the Universal Askey--Wilson Algebra\footnote{This paper is a~contribution to the Special Issue on Orthogonal Polynomials, Special Functions and Applications (OPSFA14). The full collection is available at \href{https://www.emis.de/journals/SIGMA/OPSFA2017.html}{https://www.emis.de/journals/SIGMA/OPSFA2017.html}}}

\Author{Paul TERWILLIGER}

\AuthorNameForHeading{P.~Terwilliger}

\Address{Department of Mathematics, University of Wisconsin, Madison, WI 53706-1388, USA}
\Email{\href{mailto:terwilli@math.wisc.edu}{terwilli@math.wisc.edu}}

\ArticleDates{Received January 25, 2018, in final form May 01, 2018; Published online May 07, 2018}

\Abstract{Recently Pascal Baseilhac and Stefan Kolb obtained a PBW basis for the $q$-Onsager algebra $\mathcal O_q$. They defined the PBW basis elements recursively, and it is obscure how to express them in closed form. To mitigate the difficulty, we bring in the universal Askey--Wilson algebra $\Delta_q$. There is a natural algebra homomorphism $\natural \colon \mathcal O_q \to \Delta_q$. We apply $\natural $ to the above PBW basis, and express the images in closed form. Our results make heavy use of the Chebyshev polynomials of the second kind.}

\Keywords{$q$-Onsager algebra; universal Askey--Wilson algebra; Chebyshev polynomial}

\Classification{33D80; 17B40}

\renewcommand{\thefootnote}{\arabic{footnote}}
\setcounter{footnote}{0}

\section{Introduction}

In the 1944 paper~\cite{Onsager} Lars Onsager obtained the free energy of the two-dimensional Ising model in a zero magnetic field. In that paper an infinite-dimensional Lie algebra was introduced; this algebra is now called the Onsager algebra and denoted by~$\mathcal O$. Onsager defined his algebra by giving a linear basis and the action of the Lie bracket on the basis. In~\cite{perk} Perk gave a presentation of $\mathcal O$ by generators and relations. This presentation involves two generators and two relations, called the Dolan/Grady relations~\cite{Dolgra}. This presentation is discussed in \cite[Remark~9.1]{madrid}. Via this presentation, the universal enveloping algebra of~$\mathcal O$ admits a~$q$-deformation~$\mathcal O_q$ called the $q$-Onsager algebra \cite{bas1,qSerre}. The algebra~$\mathcal O_q$ is associative and infinite-dimensional. It is defined by two generators and two relations called the $q$-Dolan/Grady relations; these are given in (\ref{eq:dg1}), (\ref{eq:dg2}) below. The $q$-Dolan/Grady relations first appeared in algebraic combinatorics, in the study of $Q$-polynomial distance-regular graphs \cite[Lemma~5.4]{tersub3}. Shortly thereafter they appeared in physics, in the study of statistical mechanical models \cite[Section~2]{bas1}. Up to the present, the representation theory of $\mathcal O_q$ remains an active area of research in mathematics \cite{INT, ItoTer, IT:aug, LS99, qSerre, madrid, uaw, pospart, aw} and physics \cite{bas2,bas1, bas6, basDef,basbel, bas8, basXXZ, bas4, BK05, basKoi, basnc, BVu}. This theory involves a linear algebraic object called a tridiagonal pair~\cite{TD00}. A finite-dimensional irreducible~$\mathcal O_q$-module is essentially the same thing as a tridiagonal pair of $q$-Racah type \cite[Theorem~3.10]{qSerre}. These tridiagonal pairs are classified up to isomorphism in \cite[Theorem~3.3]{ItoTer}. In \cite[Theorem~2.1]{IT:aug}, Ito and the present author gave a linear basis for~$\mathcal O_q$, called the zigzag basis. More information about this basis can be found in \cite[Note~4.7]{pospart}. In \cite{basbel}, Baseilhac and Belliard conjectured another linear basis for~$\mathcal O_q$; this one is motivated by how $\mathcal O_q$ is related to the reflection equation algebra~\cite{BK05, basnc}. In~\cite{BK}, Baseilhac and Kolb introduced two automor\-phisms~$T_0$,~$T_1$ of $\mathcal O_q$ that are roughly analogous to the Lusztig automorphisms of~$U_q(\widehat{\mathfrak{sl}}_2)$. They used~$T_0$,~$T_1$ and a method of Damiani~\cite{damiani} to obtain a~Poincar\'e--Birkhoff--Witt (or PBW) basis for $\mathcal O_q$ \cite[Theorem~4.3]{BK}. In our view this PBW basis is important and worthy of further study. In the present paper we study the following aspect. In \cite[Section~3.1]{BK} the PBW basis is defined recursively, and it is obscure how to express it in closed form. In order to mitigate the difficulty, we bring in a related algebra which we now describe. In~\cite{Z} Zhedanov introduced the Askey--Wilson algebra AW(3) and used it to describe the Askey--Wilson polynomials. In~\cite{uaw} the present author introduced a central extension of~AW(3), called the universal Askey--Wilson algebra $\Delta_q$. In~\cite{Huang2}, Hau-Wen Huang classified up to isomorphism the finite-dimensional irreducible $\Delta_q$-modules for~$q$ not a root of unity. A~linear basis for $\Delta_q$ is given in \cite[Theorem~7.5]{uaw}. There is a natural algebra homomorphism $\natural\colon \mathcal O_q \to \Delta_q$ \cite[Definition~10.4]{uaw}; this is described below~(\ref{eq:rel4}) in the present paper. We use $\natural $ to describe the PBW basis for $\mathcal O_q$ in the following way. We apply $\natural$ to the PBW basis vectors and consider their images in~$\Delta_q$. We express these images explicitly in the linear basis for $\Delta_q$ mentioned above. Our main results are Theorems~\ref{thm:main1short},~\ref{thm:v3}. These results make heavy use of the Chebyshev polynomials of the second kind~\cite{kls, mason}.

\section{Preliminaries}

We now begin our formal argument. Recall the natural numbers $\mathbb N = \lbrace 0,1,2,\ldots\rbrace$ and integers $\mathbb Z = \lbrace 0,\pm 1, \pm 2,\ldots\rbrace$. Let $\mathbb F$ denote an algebraically closed field with characteristic zero. All the algebras discussed in this paper are over $\mathbb F$; those without the Lie prefix are associative and have a multiplicative identity. Fix a nonzero $q \in \mathbb F$ that is not a root of~1. Recall the notation
\begin{gather}\label{eq:brack}
\lbrack n \rbrack_q = \frac{q^n-q^{-n}}{q-q^{-1}},\qquad n \in \mathbb Z.
\end{gather}
We will be discussing the $q$-Onsager algebra $\mathcal O_q$ and the universal Askey--Wilson algebra $\Delta_q$. We now recall these algebras.

The algebra $\mathcal O_q$ (see \cite[Section~2]{bas1}, \cite[Definition~3.9]{qSerre}) is defined by generators $A$, $B$ and relations
\begin{gather}
A^3B-\lbrack 3\rbrack_q A^2BA+ \lbrack 3\rbrack_q ABA^2 -BA^3 = \big(q^2-q^{-2}\big)^2 (BA-AB), \label{eq:dg1}\\
B^3A-\lbrack 3\rbrack_q B^2AB + \lbrack 3\rbrack_q BAB^2 -AB^3 = \big(q^2-q^{-2}\big)^2 (AB-BA).\label{eq:dg2}
\end{gather}
The relations (\ref{eq:dg1}), (\ref{eq:dg2}) are called the {\it $q$-Dolan/Grady relations}. In~\cite{BK}, Baseilhac and Kolb introduced the automorphisms~$T_0$, $T_1$ of~$\mathcal O_q$. These automorphisms satisfy
\begin{alignat}{3}
& T_0(A)=A, \qquad && T_0(B)= B + \frac{q A^2 B- \big(q+q^{-1}\big)ABA+ q^{-1}BA^2} {\big(q-q^{-1}\big)\big(q^2-q^{-2}\big)},&\label{eq:L0}\\
& T_1(B)=B, \qquad && T_1(A)= A + \frac{q B^2 A- \big(q+q^{-1}\big)BAB+ q^{-1}AB^2} {\big(q-q^{-1}\big)\big(q^2-q^{-2}\big)}.& \label{eq:L1}
\end{alignat}
The inverse automorphisms satisfy
\begin{alignat}{3}
& T^{-1}_0(A)=A, \qquad && T^{-1}_0(B)= B + \frac{q^{-1} A^2 B- \big(q+q^{-1}\big)ABA+ qBA^2} {\big(q-q^{-1}\big)(q^2-q^{-2}\big)},& \label{eq:L0i}\\
& T^{-1}_1(B)=B, \qquad && T^{-1}_1(A)= A + \frac{q^{-1} B^2 A- \big(q+q^{-1}\big)BAB+ qAB^2}{\big(q-q^{-1}\big)\big(q^2-q^{-2}\big)}.& \label{eq:L1i}
\end{alignat}
In \cite{BK}, Baseilhac and Kolb used $T_0$ and $T_1$ to define some elements in $\mathcal O_q$, denoted
\begin{gather}\label{eq:PBW}
\lbrace B_{n \delta + \alpha_0}\rbrace_{n=0}^\infty, \qquad \lbrace B_{n \delta + \alpha_1}\rbrace_{n=0}^\infty, \qquad \lbrace B_{n \delta}\rbrace_{n=1}^\infty.
\end{gather}
The elements
(\ref{eq:PBW}) were shown to be a PBW basis for $\mathcal O_q$, provided that $q$ is transcendental over $\mathbb F$ \cite[Theorem~4.3]{BK}. By definition
\begin{center}
\begin{tabular}[t]{c|cccccc}
 $n$ & $B_{n\delta+\alpha_0}$ &
 $B_{n\delta+\alpha_1}$ \\
\hline
$0$ & $A$ & $B$\\
$1$ &
$T_0 (B)$ &
$T^{-1}_1 (A)$\\
$2$ &
$T_0 T_1 (A)$ &
$T^{-1}_1 T^{-1}_0 (B)$\\
$3$ &
$T_0 T_1 T_0 (B)$ &
$T^{-1}_1 T^{-1}_0 T^{-1}_1 (A)$\\
$4$ &
$T_0 T_1 T_0 T_1 (A)$ &
$T^{-1}_1 T^{-1}_0 T^{-1}_1 T^{-1}_0 (B)$\\
$\vdots $ &
$\vdots $ &
$\vdots $
	 \end{tabular}
\end{center}

\noindent
and for $n\geq 1$,
\begin{gather} \label{eq:Bdelta}
B_{n \delta} = q^{-2} B_{(n-1)\delta+\alpha_1} A - A B_{(n-1)\delta+\alpha_1} + \big(q^{-2}-1\big)\sum_{\ell=0}^{n-2} B_{\ell \delta+\alpha_1} B_{(n-\ell-2) \delta+\alpha_1}.
\end{gather}
By \cite[Proposition~5.12]{BK} the elements $\lbrace B_{n \delta} \rbrace_{n=1}^\infty $ mutually commute. We have $B_{\delta} = q^{-2} BA - AB$. Define $\tilde B_{\delta} = q^{-2} AB-BA$. By \cite[Lemma~3.1]{BK} we have
$T_1 (B_\delta) = \tilde B_{\delta}$ and $T_0 (\tilde B_\delta) = B_{\delta}$. So as noted in \cite[Lemma~3.1]{BK},
\begin{gather}\label{eq:fix}
T_0 T_1 (B_\delta) = B_{\delta}, \qquad T^{-1}_1 T^{-1}_0 (B_\delta) = B_{\delta}.
\end{gather}

Next we recall the universal Askey--Wilson algebra $\Delta_q$ \cite[Definition~1.2]{uaw}. This algebra is defined by generators and relations.
The generators are $A$, $B$, $C$. The relations assert that each of the following is central in $\Delta_q$:
\begin{gather*}
A + \frac{qBC-q^{-1}CB}{q^2-q^{-2}}, \qquad B + \frac{qCA-q^{-1}AC}{q^2-q^{-2}}, \qquad C + \frac{qAB-q^{-1}BA}{q^2-q^{-2}}.
\end{gather*}
For the above three central elements, multiply each by $q+q^{-1}$ to get $\alpha$, $\beta$, $\gamma $. Thus
\begin{gather}
A + \frac{qBC-q^{-1}CB}{q^2-q^{-2}} = \frac{\alpha}{q+q^{-1}},\label{eq:alpha}\\
B + \frac{qCA-q^{-1}AC}{q^2-q^{-2}} =\frac{\beta}{q+q^{-1}},\label{eq:beta}\\
C + \frac{qAB-q^{-1}BA}{q^2-q^{-2}}= \frac{\gamma}{q+q^{-1}}. \label{eq:gamma}
\end{gather}
Each of $\alpha$, $\beta$, $\gamma$ is central in $\Delta_q$. By \cite[Corollary~8.3]{uaw} the center $Z(\Delta_q)$ is generated by $\alpha$, $\beta$, $\gamma$, $\Omega$ where
\begin{gather}
\Omega = qABC+q^2 A^2 + q^{-2} B^2+ q^2 C^2-q A \alpha -q^{-1} B \beta - q C \gamma. \label{eq:Cas}
\end{gather}
The element $\Omega$ is called the Casimir element. By \cite[Theorem~8.2]{uaw} the following is a linear basis for the $\mathbb F$-vector space $Z(\Delta_q)$:
\begin{gather*}
\Omega^\ell \alpha^r \beta^s \gamma^t, \qquad \ell, r,s,t \geq 0.
\end{gather*}
We mention two bases for $\Delta_q$. By \cite[Theorem~4.1]{uaw}, the following is a linear basis for the $\mathbb F$-vector space $\Delta_q$:
\begin{gather}
A^i B^j C^k \alpha^r \beta^s \gamma^t, \qquad i,j,k,r,s,t \geq 0.\label{eq:basispre}
\end{gather}
By \cite[Theorem~7.5]{uaw}, the following is a linear basis for the $\mathbb F$-vector space $\Delta_q$:{\samepage
\begin{gather}
A^i B^j C^k \Omega^\ell \alpha^r \beta^s \gamma^t, \qquad i,j,k,\ell, r,s,t \geq 0, \qquad ijk=0.\label{eq:mainbasis}
\end{gather}
For convenience we will work with the basis (\ref{eq:mainbasis}).}

Shortly we will discuss how $\Delta_q$ is related to $\mathcal O_q$. To aid in this discussion we recall from \cite[Section~2]{uaw} a second presentation of $\Delta_q$. By (\ref{eq:alpha})--(\ref{eq:gamma}) the algebra $\Delta_q$ is generated by~$A$,~$B$,~$\gamma$. Moreover
\begin{gather}
C = \frac{\gamma}{q+q^{-1}} -\frac{qAB-q^{-1}BA}{q^2-q^{-2}},\label{eq:getc}\\
\alpha = \frac{B^2A-\big(q^2+q^{-2}\big)BAB+AB^2+\big(q^2-q^{-2}\big)^2A+\big(q-q^{-1}\big)^2B\gamma}{\big(q-q^{-1}\big)\big(q^2-q^{-2}\big)},\label{eq:getalpha}\\
\beta = \frac{A^2B-\big(q^2+q^{-2}\big)ABA+BA^2+\big(q^2-q^{-2}\big)^2B+\big(q-q^{-1}\big)^2A\gamma}{\big(q-q^{-1}\big)\big(q^2-q^{-2}\big)}.\label{eq:getbeta}
\end{gather}
By \cite[Theorem~2.2]{uaw} the algebra $\Delta_q$ has a presentation by generators $A$, $B$, $\gamma$ and relations
\begin{gather}
A^3B-\lbrack 3\rbrack_q A^2BA+\lbrack 3\rbrack_q ABA^2-BA^3= \big(q^2-q^{-2}\big)^2(BA-AB),\label{eq:rel1}\\
B^3A-\lbrack 3\rbrack_q B^2AB+\lbrack 3\rbrack_q BAB^2-AB^3= \big(q^2-q^{-2}\big)^2(AB-BA),\label{eq:rel2}\\
A^2B^2-B^2A^2+\big(q^2+q^{-2}\big)(BABA-ABAB) = \big(q-q^{-1}\big)^2(BA-AB)\gamma,\label{eq:rel3}\\
 \gamma A = A \gamma, \qquad \gamma B=B \gamma.\label{eq:rel4}
 \end{gather}
The relations (\ref{eq:rel1}), (\ref{eq:rel2}) are the $q$-Dolan/Grady relations. Consequently there exists an algebra homomorphism $\natural\colon \mathcal O_q \to \Delta_q$ that sends $A\mapsto A$ and $B\mapsto B$. This homomorphism is not injective by \cite[Theorem~10.9]{uaw}.

In order to clarify the nature of $T_0$, $T_1$, $\natural$ we now introduce some automorphisms $t_0$, $t_1$ of~$\Delta_q$ such that $t_0 \natural = \natural T_0$ and $t_1 \natural = \natural T_1$. To this end, we recall from \cite[Section~3]{uaw} how the modular group ${\rm PSL}_2(\mathbb Z)$ acts on $\Delta_q$ as a group of automorphisms. By \cite{RCA} the group ${\rm PSL}_2(\mathbb Z)$ has a~presentation by generators $\rho$, $\sigma$ and relations $\rho^3=1$, $\sigma^2=1$. Earlier in this section we gave two presentations of $\Delta_q$. Using these presentations we find that ${\rm PSL}_2(\mathbb Z)$ acts on $\Delta_q$ as a~group of automorphisms in the following way:
\begin{gather}\label{eq:table}
\begin{tabular}{c| ccc | c c c}
$u$ & $A$ & $B$ & $C$
& $\alpha $ & $\beta $ & $\gamma$\\
\hline
$\rho(u)$ & $B$ & $C$ & $A$
& $\beta $ & $\gamma $ & $\alpha$\\
$\sigma(u) $ & $B$ & $A$ & $C+\frac{AB-BA}{q-q^{-1}}$
& $\beta $ & $\alpha $ & $\gamma$
\end{tabular}
\end{gather}

This action is faithful by \cite[Theorem 3.13]{uaw}. From the table (\ref{eq:table}) we see that the ${\rm PSL}_2(\mathbb Z)$-generators $\rho$, $\sigma$ each permute $ \alpha$, $\beta$, $\gamma$. This gives a group homomorphism from ${\rm PSL}_2(\mathbb Z)$ onto the symmetric group $S_3$. Let $\mathbb P$ denote the kernel of the homomorphism. Thus $\mathbb P$ is a normal subgroup of ${\rm PSL}_2(\mathbb Z)$, and the quotient group ${\rm PSL}_2(\mathbb Z)/\mathbb P$ is isomorphic to $S_3$. The cosets of $\mathbb P$ in ${\rm PSL}_2(\mathbb Z)$ are
\begin{gather*}
\mathbb P, \qquad \rho \mathbb P, \qquad \rho^2 \mathbb P, \qquad \sigma \mathbb P, \qquad \rho \sigma \mathbb P, \qquad \rho^2 \sigma \mathbb P.
\end{gather*}
We remark that in the literature the groups ${\rm PSL}_2(\mathbb Z)$ and $\mathbb P$ are often denoted by $\Gamma$ and $\Gamma(2)$, respectively; see for example \cite{RCA,RCA2}. Define
\begin{gather}\label{eq:t0t1}
t_0 = \big(\rho^2 \sigma\big)^2 = (\sigma \rho)^{-2},\qquad t_1 = \big(\sigma \rho^2\big)^2 = (\rho \sigma)^{-2}.
\end{gather}
Using (\ref{eq:table}), (\ref{eq:t0t1}) we obtain $t_0, t_1 \in \mathbb P$. By \cite[Proposition~4]{RCA2} the group $\mathbb P$ is freely generated by $t^{\pm 1}_0$, $t^{\pm 1}_1$. Using (\ref{eq:getc}), (\ref{eq:table}), (\ref{eq:t0t1}) we obtain
\begin{alignat}{3}
& t_0(A)=A, \qquad && t_0(B)= B + \frac{q A^2 B- \big(q+q^{-1}\big)ABA+ q^{-1}BA^2} {\big(q-q^{-1}\big)\big(q^2-q^{-2}\big)},& \label{eq:t0}\\
& t_1(B)=B, \qquad && t_1(A)= A + \frac{q B^2 A- \big(q+q^{-1}\big)BAB+ q^{-1}AB^2}{\big(q-q^{-1}\big)\big(q^2-q^{-2}\big)}& \label{eq:t1}
\end{alignat}
and
\begin{alignat}{3}
& t^{-1}_0(A)=A, \qquad && t^{-1}_0(B)= B + \frac{q^{-1} A^2 B- \big(q+q^{-1}\big)ABA+ qBA^2}{\big(q-q^{-1}\big)\big(q^2-q^{-2}\big)},& \label{eq:t0i}\\
& t^{-1}_1(B)=B, \qquad && t^{-1}_1(A)= A + \frac{q^{-1} B^2 A- \big(q+q^{-1}\big)BAB+ qAB^2}{\big(q-q^{-1}\big)\big(q^2-q^{-2}\big)}.& \label{eq:t1i}
\end{alignat}
The actions (\ref{eq:t0})--(\ref{eq:t1i}) match (\ref{eq:L0})--(\ref{eq:L1i}). Consequently the following diagrams commute:
 \begin{gather}
 { \begin{CD}
 {\mathcal O}_q @>\natural>>
 \Delta_q \\
 @V T^{\pm 1}_1 VV @VV t^{\pm 1}_1 V \\
 {\mathcal O}_q
 @>\natural>>
 \Delta_q,
 \end{CD}
 } \qquad \qquad\qquad {
 \begin{CD}
 {\mathcal O}_q @>\natural>>
 \Delta_q \\
 @V T^{\pm 1}_0 VV @VV t^{\pm 1}_0 V \\
 {\mathcal O}_q
 @>\natural>>
 \Delta_q.
 \end{CD}}\label{eq:cd}
\end{gather}
Let $\operatorname{Aut}(\mathcal O_q)$ denote the automorphism group of~$\mathcal O_q$. Let $G$ denote the subgroup of $\operatorname{Aut}(\mathcal O_q)$ generated by $T^{\pm 1}_0$, $T^{\pm 1}_1$. Since~$\mathbb P$ is freely generated by $t^{\pm 1}_0$, $t^{\pm 1}_1$ there exists a group homomorphism $\varepsilon \colon \mathbb P\to G$ that sends $t^{\pm 1}_0 \mapsto T^{\pm 1}_0$ and $t^{\pm 1}_1 \mapsto T^{\pm 1}_1$. Using the commuting diagrams~(\ref{eq:cd}) one finds that for $\pi \in \mathbb P$ the following diagram commutes:
 \begin{gather}
 { \begin{CD}
 {\mathcal O}_q @>\natural>>
 \Delta_q \\
 @V \varepsilon (\pi) VV @VV \pi V \\
 {\mathcal O}_q
 @>\natural>>
 \Delta_q.
 \end{CD} }\label{eq:epsIso}
\end{gather}
We now prove that $\varepsilon$ is an isomorphism. By construction $\varepsilon$ is surjective. We show that $\varepsilon$ is injective. Given an element $r$ in the kernel of $\varepsilon$, we show that $r$ is the identity in~$\mathbb P$. To this end, we show that $r$ fixes the generators $A$, $B$, $\gamma$ of $\Delta_q$. The map $\varepsilon(r)$ is the identity in $G$, so $\varepsilon(r)$ fixes the elements $A$, $B$ of~$\mathcal O_q$. By the commuting diagram (\ref{eq:epsIso}) the map $r$ fixes the elements $A$, $B$ of $\Delta_q$. Also $r$ fixes $\gamma$ since $r \in \mathbb P$ and everything in $\mathbb P$ fixes~$\gamma$. We have shown that~$r$ fixes the generators $A$, $B$, $\gamma$ of~$\Delta_q$ so $r$ is the identity in $\mathbb P$. Consequently $\varepsilon $ is injective and hence an isomorphism.

It is mentioned in \cite[Section~2.3]{BK} that one expects $G$ to be freely generated by $T^{\pm 1}_0$, $T^{\pm 1}_1$. This is now easily proven as follows. The group $\mathbb P$ is freely generated by~$t^{\pm 1}_0$,~$t^{\pm 1}_1$. Applying the isomorphism $\varepsilon\colon \mathbb P \to G$ we find that $G$ is freely generated by $T^{\pm 1}_0$, $T^{\pm 1}_1$.

Next we consider how the map $\natural\colon \mathcal O_q \to \Delta_q$ acts on the elements~(\ref{eq:PBW}). For these elements we retain the same notation for their images under $\natural$. Our goal is to obtain these images in closed form, in terms of the basis~(\ref{eq:mainbasis}). In order to obtain these images, it is convenient to bring in the Chebyshev polynomials of the second kind. These polynomials are reviewed in the next section.

\section{The Chebyshev polynomials}
In this section we review the Chebyshev polynomials of the second kind; see \cite{kls, mason} for further details. Let $x$ denote an indeterminate. Let $\mathbb F \lbrack x \rbrack$ denote the $\mathbb F$-algebra consisting of the polynomials in $x$ that have all coefficients in $\mathbb F$.

\begin{Definition}[{see \cite[p.~4]{mason}}] \label{def:Tch} For $n \in \mathbb N$ define $U_n \in \mathbb F \lbrack x \rbrack$ by
\begin{gather*}
U_0 = 1, \qquad U_1 = x,\qquad x U_n = U_{n+1} + U_{n-1}, \qquad n\geq 1.
\end{gather*}
The polynomial $U_n$ is monic and degree $n$. We call $U_n$ the {\it $n$th Chebyshev polynomial of the second kind}. For notational convenience define $U_{n}=0$ for all integers $n<0$.
\end{Definition}

\begin{Note}The above polynomials $U_n$ are normalized to be monic. This normalization differs from the one in \cite[Section 9.8.2]{kls}. To go from our normalization to the one in \cite[Sec\-tion~9.8.2]{kls}, replace~$x$ by~$2x$.
\end{Note}

In the table below we display $U_n$ for $0 \leq n \leq 9$.
\begin{center}
 \begin{tabular}[t]{c|c}
 $n$ & $U_n$ \\
\hline
$0$ & $1$\\
$1$ & $x$\\
$2$ & $x^2-1$\\
$3$ & $x^3-2x$\\
$4$ & $x^4-3x^2+1$\\
$5$ & $x^5-4x^3+3x$\\
$6$ & $x^6-5x^4+6x^2-1$\\
$7$ & $x^7-6x^5+10x^3-4x$\\
$8$ & $x^8-7x^6+15x^4-10x^2+1$\\
$9$ & $x^9-8x^7+21x^5-20x^3+5x$
\end{tabular}
\end{center}

By \cite[pp.~332--333]{mason},
\begin{gather*}
 U_n(x)= \sum_{i=0}^{\lfloor n/2 \rfloor} (-1)^i \binom{n-i}{i} x^{n-2i}, \qquad n \in \mathbb N.
\end{gather*}
Next we express the polynomials $U_n$ in a more closed form. Let $z$ denote an indeterminate. Let $\mathbb F\lbrack z, z^{-1}\rbrack$ denote the $\mathbb F$-algebra consisting of the Laurent polynomials in $z$ that have all coefficients in $\mathbb F$. This algebra has an automorphism that sends $z\mapsto z^{-1}$. An element of $\mathbb F\lbrack z, z^{-1}\rbrack$ that is fixed by the automorphism is called {\it symmetric}. The symmetric elements form a subalgebra of $\mathbb F\lbrack z, z^{-1}\rbrack$ called its {\it symmetric part}. There exists an injective algebra homomorphism $\iota\colon \mathbb F\lbrack x \rbrack \to \mathbb F\lbrack z, z^{-1}\rbrack$ that sends $x\mapsto z+z^{-1}$. The image of $\mathbb F\lbrack x \rbrack$ under $\iota$ is the symmetric part of $\mathbb F\lbrack z, z^{-1}\rbrack$. Via $\iota$ we identify $\mathbb F\lbrack x \rbrack$ with the symmetric part of $\mathbb F\lbrack z, z^{-1}\rbrack$. So for $n \in \mathbb N$ we view
\begin{gather*}
 \frac{z^{n+1}-z^{-n-1}}{z-z^{-1}} = z^n + z^{n-2} + \cdots + z^{2-n}+ z^{-n}
\end{gather*}
as an element of $\mathbb F\lbrack x \rbrack$.

\begin{Lemma}[{see \cite[p.~326]{mason}}] \label{lem:zform} For $n \in \mathbb N$ we have
\begin{gather*}
U_n(x) = \frac{z^{n+1}-z^{-n-1}}{z-z^{-1}},
\end{gather*}
where we recall $x=z+z^{-1}$.
\end{Lemma}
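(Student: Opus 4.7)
The plan is to prove the identity by induction on $n$, showing that the right-hand side $V_n := (z^{n+1}-z^{-n-1})/(z-z^{-1})$ satisfies the same initial conditions and three-term recurrence as the Chebyshev polynomials $U_n$ in Definition~\ref{def:Tch}. Since $U_n$ is uniquely determined by those data, the two sequences must coincide inside the symmetric part of $\mathbb F\lbrack z,z^{-1}\rbrack$, which we have identified with $\mathbb F\lbrack x\rbrack$ via $\iota$.

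For the base cases I would simply compute: $V_0 = (z-z^{-1})/(z-z^{-1}) = 1 = U_0$, and $V_1 = (z^2-z^{-2})/(z-z^{-1}) = z+z^{-1} = x = U_1$. For the inductive step I would multiply $V_n$ by $x = z+z^{-1}$ and expand the numerator:
\begin{gather*}
x V_n = \frac{(z+z^{-1})\bigl(z^{n+1}-z^{-n-1}\bigr)}{z-z^{-1}} = \frac{z^{n+2}-z^{-n-2}}{z-z^{-1}} + \frac{z^{n}-z^{-n}}{z-z^{-1}} = V_{n+1}+V_{n-1},
\end{gather*}
which is exactly the recurrence $x U_n = U_{n+1}+U_{n-1}$ satisfied by the Chebyshev polynomials. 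Hence by induction $V_n = U_n(x)$ for all $n \in \mathbb N$.

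There is no real obstacle here; the only thing worth checking carefully is that the right-hand side actually lives in the symmetric part of $\mathbb F\lbrack z,z^{-1}\rbrack$, so that the identification with an element of $\mathbb F\lbrack x\rbrack$ via $\iota$ is legitimate. This is immediate from the expansion $(z^{n+1}-z^{-n-1})/(z-z^{-1}) = z^n+z^{n-2}+\cdots+z^{-n}$ displayed just before the statement of the lemma, which is manifestly invariant under $z\mapsto z^{-1}$. Once that observation is in place, the inductive comparison with Definition~\ref{def:Tch} finishes the argument.
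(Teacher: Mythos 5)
Your proof is correct. The paper offers no proof of this lemma at all --- it is simply quoted from Mason and Handscomb with a page citation --- so there is nothing to compare against; your induction (checking $V_0=1$, $V_1=x$, and the recurrence $xV_n=V_{n+1}+V_{n-1}$ by splitting the numerator) is the standard self-contained verification against Definition~\ref{def:Tch}, and your observation that the right-hand side is invariant under $z\mapsto z^{-1}$, so that the identification with an element of $\mathbb F\lbrack x \rbrack$ via $\iota$ is legitimate, correctly addresses the only point that needed care.
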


In this paper, on several occasions we will consider generating functions in an indeterminate $t$. These generating functions involve a formal power series; issues of convergence are not considered. The following generating function will be useful.
\begin{Lemma}[{see \cite[p.~227]{kls}}] \label{lem:gf} For an indeterminate $t$,
\begin{gather}\label{eq:gen}
\sum_{n\in \mathbb N} t^n U_n(x) = \frac{1}{1-t x + t^2}.
\end{gather}
\end{Lemma}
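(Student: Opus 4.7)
The plan is to derive the generating function identity by substituting the closed-form expression for $U_n(x)$ from Lemma \ref{lem:zform} and collapsing the resulting sum into two geometric series in $tz$ and $tz^{-1}$, where $x = z+z^{-1}$.

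I would work formally in the ring $\mathbb F[z,z^{-1}][[t]]$, which is legitimate since the excerpt defers all questions of convergence. Using $U_n(x) = (z^{n+1}-z^{-n-1})/(z-z^{-1})$ and splitting the sum, the left side of \eqref{eq:gen} becomes
\begin{gather*}
\sum_{n\in\mathbb N} t^n U_n(x) = \frac{1}{z-z^{-1}}\left(z\sum_{n\in\mathbb N}(tz)^n - z^{-1}\sum_{n\in\mathbb N}(tz^{-1})^n\right) = \frac{1}{z-z^{-1}}\left(\frac{z}{1-tz}-\frac{z^{-1}}{1-tz^{-1}}\right).
\end{gather*}
Combining the two fractions over the common denominator $(1-tz)(1-tz^{-1})$, the numerator simplifies to $z(1-tz^{-1}) - z^{-1}(1-tz) = z - z^{-1}$, which cancels the prefactor $(z-z^{-1})^{-1}$. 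Expanding the remaining denominator yields $(1-tz)(1-tz^{-1}) = 1 - t(z+z^{-1}) + t^2 = 1 - tx + t^2$, which is the desired right side. Finally, both sides are symmetric under $z \leftrightarrow z^{-1}$, so by the identification $\iota$ of $\mathbb F[x]$ with the symmetric part of $\mathbb F[z,z^{-1}]$ recalled just before Lemma \ref{lem:zform}, the identity descends to an identity in $\mathbb F[x][[t]]$.

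There is no real obstacle here; the argument is essentially bookkeeping in formal power series and leans entirely on the previous lemma. The one point requiring mild care is the simultaneous handling of the two geometric series and the cancellation of $z-z^{-1}$ between numerator and prefactor, but this is a short direct calculation. An alternative approach would be to set $F(t) = \sum_n t^n U_n(x)$, multiply by $1 - tx + t^2$, and verify the product equals $1$ using the three-term recurrence $xU_n = U_{n+1}+U_{n-1}$ together with the initial conditions $U_0=1$, $U_{-1}=0$; I would prefer the $z$-variable route above since it exploits Lemma \ref{lem:zform} already at hand and avoids a separate bookkeeping of boundary terms.
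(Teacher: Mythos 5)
Your proof is correct and coincides with one of the two routes the paper itself indicates: the paper's one-line proof verifies that $\big(\sum_{n\in\mathbb N} t^n U_n(x)\big)\big(1-tx+t^2\big)=1$ using the three-term recurrence of Definition~\ref{def:Tch}, and then adds ``Alternatively, use Lemma~\ref{lem:zform}'' --- which is precisely the geometric-series computation you carry out in detail. The only difference is which of the two arguments is taken as primary; both your $z$-variable calculation and your fallback recurrence check are sound.
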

\begin{proof} Using Definition~\ref{def:Tch} one finds that the product $\big(\sum\limits_{n \in \mathbb N} t^n U_n(x) \big) \big(1-tx+t^2\big)$ is equal to~1. Alternatively, use Lemma~\ref{lem:zform}.
\end{proof}

The following variations on Lemma~\ref{lem:gf} will be used repeatedly.
\begin{Lemma} \label{lem:var} For an indeterminate $t$,
\begin{gather*}
\sum_{n \in \mathbb N} (-1)^n q^n t^n U_{n-1}(x) = \frac{-1}{qt+q^{-1} t^{-1} + x}, \\
\sum_{n \in \mathbb N} (-1)^n q^{-n} t^n U_{n-1}(x) = \frac{-1}{q^{-1}t+q t^{-1} + x}.
\end{gather*}
\end{Lemma}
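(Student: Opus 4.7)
The plan is to deduce both identities from Lemma~\ref{lem:gf} by a suitable substitution $t \mapsto -q^{\pm 1} t$, after shifting the summation index to account for the $U_{n-1}$ in place of $U_n$.

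First I would observe that, since $U_{-1} = 0$ by convention, the $n=0$ term of each sum vanishes, so both sums effectively start at $n=1$. Setting $m = n-1$ in the first identity gives
\begin{gather*}
\sum_{n \in \mathbb N} (-1)^n q^n t^n U_{n-1}(x) = -qt \sum_{m \in \mathbb N} (-qt)^m U_m(x).
\end{gather*}
Applying Lemma~\ref{lem:gf} with $t$ replaced by $-qt$, the right-hand sum equals $\big(1+qtx+q^2t^2\big)^{-1}$, so the left-hand side equals $-qt\big(1+qtx+q^2t^2\big)^{-1}$. Multiplying numerator and denominator by $q^{-1}t^{-1}$ rewrites this as $-\big(qt+q^{-1}t^{-1}+x\big)^{-1}$, which is the claimed value.

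The second identity is handled in exactly the same way, with $-qt$ replaced by $-q^{-1}t$. After the index shift one obtains $-q^{-1}t\big(1+q^{-1}tx+q^{-2}t^2\big)^{-1}$, and multiplying numerator and denominator by $qt^{-1}$ yields $-\big(q^{-1}t+qt^{-1}+x\big)^{-1}$.

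There is no real obstacle here; the only point requiring a moment's care is the convention $U_{-1}=0$, which is what allows the shift $m=n-1$ to go through cleanly without a stray boundary term.
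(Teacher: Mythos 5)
Your proof is correct, and it is exactly the argument the paper intends: the paper states Lemma~\ref{lem:var} without proof, merely as a ``variation on Lemma~\ref{lem:gf}'', and your substitution $t\mapsto -q^{\pm 1}t$ combined with the index shift justified by $U_{-1}=0$ is the natural way to carry that out. The algebra checks out, including the final clearing of the factor $q^{\pm 1}t$ from the denominator.
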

\begin{Lemma} \label{lem:Tbrack} For an indeterminate $t$,
\begin{gather*}
\sum_{n \in \mathbb N} (-1)^n t^n \lbrack n \rbrack_q U_{n-1}(x) = \frac{t-t^{-1}}{ \big(q t+ q^{-1} t^{-1} + x\big) \big(q^{-1} t+ q t^{-1} + x\big)}.
\end{gather*}
\end{Lemma}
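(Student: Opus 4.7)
The plan is to reduce the claim to the two generating functions in Lemma~\ref{lem:var} by exploiting the linearity of the formal sum in $[n]_q$. Recall from (\ref{eq:brack}) that
\begin{gather*}
[n]_q = \frac{q^n - q^{-n}}{q-q^{-1}},
\end{gather*}
so the left-hand side splits as
\begin{gather*}
\sum_{n \in \mathbb N} (-1)^n t^n [n]_q U_{n-1}(x) = \frac{1}{q-q^{-1}}\left(\sum_{n \in \mathbb N}(-1)^n q^n t^n U_{n-1}(x) - \sum_{n \in \mathbb N}(-1)^n q^{-n} t^n U_{n-1}(x)\right).
\end{gather*}

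Next I would plug in the two closed forms provided by Lemma~\ref{lem:var}. This turns the right-hand side into
\begin{gather*}
\frac{1}{q-q^{-1}}\left(\frac{-1}{qt+q^{-1}t^{-1}+x} + \frac{1}{q^{-1}t+qt^{-1}+x}\right).
\end{gather*}
The only remaining work is to combine these two rational expressions over a common denominator. The numerator is $(qt+q^{-1}t^{-1}+x)-(q^{-1}t+qt^{-1}+x) = (q-q^{-1})(t-t^{-1})$, and the $q-q^{-1}$ factor cancels the prefactor $(q-q^{-1})^{-1}$, yielding the claimed expression.

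There is essentially no obstacle: the argument is a one-line partial-fraction manipulation, the only subtlety being the bookkeeping to make sure the sign in each term of Lemma~\ref{lem:var} combines correctly. I would also note that since the identity is viewed in the sense of formal power series in $t$ (as emphasized before Lemma~\ref{lem:gf}), no convergence considerations arise; the right-hand side should be interpreted by first multiplying through by $t$ and then expanding each factor $(qt+q^{-1}t^{-1}+x)^{-1}$ and $(q^{-1}t+qt^{-1}+x)^{-1}$ as formal power series in $t$, exactly as in the statement of Lemma~\ref{lem:var}.
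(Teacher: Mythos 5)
Your proposal is correct and is essentially identical to the paper's own proof: both split $\lbrack n\rbrack_q$ via (\ref{eq:brack}) into its $q^n$ and $q^{-n}$ parts, apply the two generating functions of Lemma~\ref{lem:var}, and combine over a common denominator so that the factor $q-q^{-1}$ cancels. The sign bookkeeping in your computation checks out.
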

\begin{proof} Observe that
\begin{align*}
\sum_{n \in \mathbb N} (-1)^n t^n \lbrack n \rbrack_q U_{n-1}(x)&=\sum_{n \in \mathbb N} (-1)^n t^n \frac{q^n-q^{-n}}{q-q^{-1}} U_{n-1}(x)\\
&=\sum_{n \in \mathbb N} \frac{(-1)^n t^n q^n U_{n-1}(x)}{q-q^{-1}}-\sum_{n \in \mathbb N} \frac{(-1)^n t^n q^{-n} U_{n-1}(x)}{q-q^{-1}}\\
&=\frac{1}{q-q^{-1}} \frac{-1}{qt+q^{-1}t^{-1}+x}-\frac{1}{q-q^{-1}} \frac{-1}{q^{-1}t+qt^{-1}+x}\\
& = \frac{t-t^{-1}}{\big(qt+q^{-1}t^{-1}+x\big)\big(q^{-1}t+qt^{-1}+x\big)}.\tag*{\qed}
\end{align*}\renewcommand{\qed}{}
\end{proof}

\section{Some identities}

In this section we give some identities for later use.

\begin{Lemma}\label{lem:brRec}
For $r \in \mathbb Z$,
\begin{gather*}
\lbrack r-1\rbrack_q-\big(q+q^{-1}\big) \lbrack r \rbrack_q+ \lbrack r+1 \rbrack_q = 0.
\end{gather*}
\end{Lemma}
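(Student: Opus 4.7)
The plan is to prove the identity by a direct substitution using the definition $\lbrack n \rbrack_q = (q^n - q^{-n})/(q - q^{-1})$ from (\ref{eq:brack}). First I would group the two bracket terms $\lbrack r-1 \rbrack_q + \lbrack r+1 \rbrack_q$ and pull them over the common denominator $q - q^{-1}$, which gives
$$\lbrack r-1\rbrack_q + \lbrack r+1 \rbrack_q = \frac{q^{r-1} + q^{r+1} - q^{-r+1} - q^{-r-1}}{q - q^{-1}}.$$
Then I would factor $q^{r-1}$ from the positive exponent terms and $q^{-r-1}$ from the negative exponent terms to obtain a factor of $q^2 + 1$ in the numerator.

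Next I would rewrite $q^2 + 1 = q(q + q^{-1})$, so the numerator becomes $q(q+q^{-1})(q^r - q^{-r})/q = (q+q^{-1})(q^r - q^{-r})$ after absorbing a factor of $q$ to shift the exponent. This yields
$$\lbrack r-1 \rbrack_q + \lbrack r+1 \rbrack_q = (q + q^{-1}) \cdot \frac{q^r - q^{-r}}{q - q^{-1}} = (q+q^{-1}) \lbrack r \rbrack_q,$$
which rearranges to the stated identity. The calculation is valid for all $r \in \mathbb Z$ since the formula $\lbrack n \rbrack_q = (q^n - q^{-n})/(q-q^{-1})$ is defined for all integers (and one notes $\lbrack -n \rbrack_q = -\lbrack n \rbrack_q$, which is consistent with the symmetric form of the identity).

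There is no real obstacle: the lemma is a one-line algebraic manipulation, essentially recording that $\lbrack n \rbrack_q$ satisfies the same three-term recurrence as the Chebyshev polynomials $U_n$ (Definition~\ref{def:Tch}) evaluated at $x = q + q^{-1}$. Indeed, an alternative proof is to observe that $\lbrack n+1 \rbrack_q = U_n(q + q^{-1})$ via Lemma~\ref{lem:zform} with $z = q$, and then invoke the defining recurrence of $U_n$.
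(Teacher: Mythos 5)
Your proof is correct and matches the paper's approach: the paper's entire proof is ``Use (\ref{eq:brack})'', i.e., expand each bracket by its definition and simplify, which is exactly what you do. The added remark connecting the identity to the Chebyshev recurrence at $x=q+q^{-1}$ is a nice observation but not needed.
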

\begin{proof} Use (\ref{eq:brack}).
\end{proof}

\begin{Lemma} \label{lem:ident3} For $r,s\in \mathbb Z$ we have
\begin{gather*}
\lbrack r-1\rbrack_q \lbrack s-1\rbrack_q \lbrack r-s\rbrack_q+ \lbrack r\rbrack_q \lbrack s\rbrack_q \lbrack r-s\rbrack_q = \lbrack r-1\rbrack_q \lbrack s\rbrack_q \lbrack r-s+1\rbrack_q
+ \lbrack r\rbrack_q \lbrack s-1\rbrack_q \lbrack r-s-1\rbrack_q.
\end{gather*}
\end{Lemma}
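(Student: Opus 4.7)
The plan is to reduce the cubic identity to a much simpler quadratic one. Specifically, I would first establish the auxiliary identity
\begin{gather*}
\lbrack u\rbrack_q \lbrack v\rbrack_q - \lbrack u-1\rbrack_q \lbrack v-1\rbrack_q = \lbrack u+v-1\rbrack_q, \qquad u,v \in \mathbb Z.
\end{gather*}
This follows by a direct expansion: using (\ref{eq:brack}), the left side multiplied by $(q-q^{-1})^2$ equals
$(q^u-q^{-u})(q^v-q^{-v}) - (q^{u-1}-q^{1-u})(q^{v-1}-q^{1-v})$, and the cross terms $q^{u-v}$, $q^{v-u}$ cancel, leaving $q^{u+v} - q^{u+v-2} + q^{-u-v} - q^{2-u-v} = (q-q^{-1})(q^{u+v-1} - q^{1-u-v})$, which is $(q-q^{-1})^2 \lbrack u+v-1\rbrack_q$.

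Next, I would apply this auxiliary identity in two specializations tailored to the right-hand side of the statement. Taking $(u,v) = (s, r-s+1)$ gives $\lbrack s\rbrack_q \lbrack r-s+1\rbrack_q = \lbrack s-1\rbrack_q \lbrack r-s\rbrack_q + \lbrack r\rbrack_q$, and taking $(u,v) = (s, r-s)$ and rearranging gives $\lbrack s-1\rbrack_q \lbrack r-s-1\rbrack_q = \lbrack s\rbrack_q \lbrack r-s\rbrack_q - \lbrack r-1\rbrack_q$. Substituting both into the right-hand side of the claim yields
\begin{gather*}
\lbrack r-1\rbrack_q\bigl(\lbrack s-1\rbrack_q \lbrack r-s\rbrack_q + \lbrack r\rbrack_q\bigr) + \lbrack r\rbrack_q\bigl(\lbrack s\rbrack_q \lbrack r-s\rbrack_q - \lbrack r-1\rbrack_q\bigr).
\end{gather*}
The two cross terms $\pm \lbrack r-1\rbrack_q \lbrack r\rbrack_q$ cancel, and what remains is exactly the left-hand side.

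There is no real obstacle: the content of the lemma is essentially the auxiliary identity, applied in the two places where the shifts differ by one on each index. The only alternative would be a brute-force expansion of all six triple products $(q^a-q^{-a})(q^b-q^{-b})(q^c-q^{-c})$ after clearing $(q-q^{-1})^3$, which produces $32$ signed monomials per side; that works but is unenlightening. The quadratic reduction makes the mechanism of cancellation transparent and keeps the proof to a few lines.
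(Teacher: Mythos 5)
Your proof is correct: the auxiliary identity $\lbrack u\rbrack_q \lbrack v\rbrack_q - \lbrack u-1\rbrack_q \lbrack v-1\rbrack_q = \lbrack u+v-1\rbrack_q$ checks out (the $q^{u-v}$ and $q^{v-u}$ terms cancel exactly as you say), the two specializations $(u,v)=(s,r-s+1)$ and $(u,v)=(s,r-s)$ produce $\lbrack r\rbrack_q$ and $\lbrack r-1\rbrack_q$ respectively, and the cross terms $\pm\lbrack r-1\rbrack_q\lbrack r\rbrack_q$ do cancel, leaving the left-hand side. However, your route is genuinely different from the paper's: the paper's entire proof is the one-line instruction ``Expand each side using~(\ref{eq:brack})'', i.e., the brute-force expansion of the triple products that you explicitly set aside as unenlightening. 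What the paper's approach buys is brevity on the page (the verification is mechanical and left to the reader); what yours buys is an explanation of \emph{why} the identity holds --- the cubic identity is exposed as two instances of a single quadratic contiguity relation for $q$-integers, with the difference of the two instances being precisely the discrepancy between the two sides. Your version is longer to write down but structurally more transparent, and the auxiliary identity is a reusable fact in its own right; either proof is acceptable.
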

\begin{proof} Expand each side using~(\ref{eq:brack}).
\end{proof}

\begin{Lemma} For an indeterminate $t$,
\begin{alignat*}{3}
&\sum_{\ell \in \mathbb N} t^{2 \ell} = \frac{-t^{-1}}{t-t^{-1}}, \qquad && \sum_{\ell \in \mathbb N}\ell t^{2 \ell} = \frac{1}{\big(t-t^{-1}\big)^2},& \\
&\sum_{\ell \in \mathbb N} \ell^2 t^{2 \ell} = - \frac{t+t^{-1}}{\big(t-t^{-1}\big)^3},\qquad && \sum_{\ell \in \mathbb N}\binom{\ell+1}{2} t^{2 \ell+1} = \frac{-1}{\big(t-t^{-1}\big)^3}.&
\end{alignat*}
\end{Lemma}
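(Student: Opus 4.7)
The plan is to derive all four identities from the basic geometric series
$S(t) = \sum_{\ell \in \mathbb N} t^{2\ell} = \frac{1}{1-t^2}$,
together with the key rewriting
$1-t^2 = -t\bigl(t-t^{-1}\bigr)$, which gives
$(1-t^2)^{-k} = (-t)^{-k}\bigl(t-t^{-1}\bigr)^{-k}$ for any $k\ge 1$.
Applied with $k=1$ this yields the first identity immediately:
$(1-t^2)^{-1} = -t^{-1}/(t-t^{-1})$.
Throughout I work formally in the ring of Laurent series in $t$, so convergence is not at issue.

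For the second identity, apply the operator $t\,\frac{d}{dt}$ termwise to $S(t)$, producing $\sum_{\ell}2\ell\,t^{2\ell}$ on the left and $2t^2/(1-t^2)^2$ on the right. Dividing by $2$ and applying the $k=2$ rewriting gives $\sum \ell\, t^{2\ell} = t^2/(1-t^2)^2 = 1/(t-t^{-1})^2$.

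For the third, I would apply $t\,\frac{d}{dt}$ once more to the previous identity. Routine differentiation of $t^2/(1-t^2)^2$, followed by simplification of the numerator $2t+2t^3 = 2t(1+t^2)$, yields $\sum \ell^2\, t^{2\ell} = t^2(1+t^2)/(1-t^2)^3$. Converting via the $k=3$ rewriting and using $1+t^2 = t(t+t^{-1})$ gives the stated $-(t+t^{-1})/(t-t^{-1})^3$.

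For the fourth, observe $\binom{\ell+1}{2} = (\ell^2+\ell)/2$, so
\[
\sum_{\ell\in\mathbb N}\binom{\ell+1}{2}t^{2\ell+1}
= \tfrac{t}{2}\Bigl(\textstyle\sum \ell^2 t^{2\ell}+\sum \ell\, t^{2\ell}\Bigr).
\]
Plugging in the closed forms already obtained and putting everything over the common denominator $(t-t^{-1})^3$, the numerator collapses as $-(t+t^{-1})+(t-t^{-1}) = -2t^{-1}$; multiplication by $t/2$ then gives $-1/(t-t^{-1})^3$. I do not foresee any obstacle: each step is a routine manipulation of rational expressions, the only subtlety being the consistent use of $1-t^2=-t(t-t^{-1})$ to pass between the two natural forms of the denominator and to keep signs under control.
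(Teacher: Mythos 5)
Your derivations are all correct, and the paper itself offers no argument beyond ``These are readily checked,'' so your routine verification (geometric series, the operator $t\,\frac{d}{dt}$, and the substitution $1-t^2=-t\bigl(t-t^{-1}\bigr)$) is exactly the kind of elementary computation the paper intends. No gaps.
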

\begin{proof} These are readily checked.
\end{proof}

\section{The main results}

In this section we express the images (\ref{eq:PBW}) in the basis (\ref{eq:mainbasis}). In what follows, the notation $\lbrack u,v\rbrack $ means $uv-vu$. We will use a recursion found in \cite{BK}; we give a short proof for the sake of completeness.

\begin{Lemma}[{see \cite[Section~3.1]{BK}}]\label{lem:prelim} In the algebra ${\mathcal O}_q$,
\begin{gather}
B_{\alpha_0}=A, \qquad B_{\delta+\alpha_0} = B +\frac{q \lbrack B_{\delta}, A\rbrack}{\big(q-q^{-1}\big)\big(q^2-q^{-2}\big)},\label{eq:line1}\\
B_{n \delta+\alpha_0} = B_{(n-2)\delta+\alpha_0}+\frac{q \lbrack B_{\delta}, B_{(n-1)\delta+\alpha_0}\rbrack}{\big(q-q^{-1}\big)\big(q^2-q^{-2}\big)}, \qquad n\geq 2\label{eq:line2}
\end{gather}
and also
\begin{gather}
B_{\alpha_1}=B, \qquad B_{\delta+\alpha_1} = A -\frac{q \lbrack B_{\delta}, B\rbrack}{\big(q-q^{-1}\big)\big(q^2-q^{-2}\big)},\label{eq:line3}\\
B_{n \delta+\alpha_1} = B_{(n-2)\delta+\alpha_1}-\frac{q \lbrack B_{\delta}, B_{(n-1)\delta+\alpha_1}\rbrack}{\big(q-q^{-1}\big)\big(q^2-q^{-2}\big)}, \qquad n\geq 2.\label{eq:line4}
\end{gather}
\end{Lemma}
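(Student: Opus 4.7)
My plan is to deduce the recursions (\ref{eq:line2}) and (\ref{eq:line4}) from the one-step formulas in (\ref{eq:line1}) and (\ref{eq:line3}) using the invariance $T_0T_1(B_\delta)=B_\delta$ from (\ref{eq:fix}), together with the observation, immediate from the table of PBW generators, that $B_{n\delta+\alpha_0}=T_0T_1\bigl(B_{(n-2)\delta+\alpha_0}\bigr)$ for $n\geq 2$, and analogously $B_{n\delta+\alpha_1}=T_1^{-1}T_0^{-1}\bigl(B_{(n-2)\delta+\alpha_1}\bigr)$ for $n\geq 2$.

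First I would verify the closed forms for $B_{\delta+\alpha_0}=T_0(B)$ and $B_{\delta+\alpha_1}=T_1^{-1}(A)$ by direct computation. From $B_\delta=q^{-2}BA-AB$ one obtains $q[B_\delta,A]=qA^2B-(q+q^{-1})ABA+q^{-1}BA^2$, which is precisely the numerator of the rational term in (\ref{eq:L0}); this establishes the second formula of (\ref{eq:line1}). The symmetric computation gives $-q[B_\delta,B]=q^{-1}B^2A-(q+q^{-1})BAB+qAB^2$, matching the numerator in (\ref{eq:L1i}) and yielding the second formula of (\ref{eq:line3}).

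For the recursion (\ref{eq:line2}) I would induct on $n$ in steps of two, taking $n=2,3$ as base cases. Applying $T_0T_1$ to the $B_{\delta+\alpha_1}$ identity, the left side becomes $T_0T_1(T_1^{-1}(A))=T_0(A)=A$, while the right side turns into $B_{2\delta+\alpha_0}-\frac{q[B_\delta,B_{\delta+\alpha_0}]}{(q-q^{-1})(q^2-q^{-2})}$ upon using $T_0T_1(B_\delta)=B_\delta$, $T_0T_1(A)=B_{2\delta+\alpha_0}$, and $T_0T_1(B)=T_0(B)=B_{\delta+\alpha_0}$; this is the $n=2$ identity. Applying $T_0T_1$ to the $B_{\delta+\alpha_0}$ identity in the same fashion yields the $n=3$ identity. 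For the inductive step, apply $T_0T_1$ to the identity at index $n$: the invariance of $B_\delta$ and the shift $B_{m\delta+\alpha_0}\mapsto B_{(m+2)\delta+\alpha_0}$ convert it directly into the identity at index $n+2$. The proof of (\ref{eq:line4}) runs entirely in parallel with $T_1^{-1}T_0^{-1}$ in place of $T_0T_1$; that automorphism also fixes $B_\delta$ by (\ref{eq:fix}) and shifts the $\alpha_1$-family vectors, while the overall minus sign in (\ref{eq:line4}) reflects that the base identities used here are the ones swapping the roles of $A$ and $B$.

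The only genuine computation is the expansion of the two commutators $[B_\delta,A]$ and $[B_\delta,B]$ needed for the base formulas; everything else is bookkeeping driven by the $T_0T_1$ and $T_1^{-1}T_0^{-1}$ symmetries. I therefore anticipate no substantive obstacle beyond careful tracking of signs and of which automorphism is used in each of the two parallel inductions.
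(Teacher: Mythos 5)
Your proposal is correct and follows essentially the same route as the paper: both verify the $n=1$ cases by expanding the commutators against the explicit formulas for $T_0(B)$ and $T_1^{-1}(A)$, and both obtain the recursions by applying $T_0T_1$ (resp.\ $T_1^{-1}T_0^{-1}$) repeatedly, using the invariance of $B_\delta$ and the shift action on the PBW vectors. The only cosmetic difference is that you organize the second step as an induction in steps of two, whereas the paper applies $(T_0T_1)^{\pm r}$ in one stroke after splitting on the parity of $n$.
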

\begin{proof} We show that
\begin{gather}\label{eq:L0B}
T_0(B) = B + \frac{q\lbrack B_\delta, A\rbrack}{\big(q-q^{-1}\big)\big(q^2-q^{-2}\big)},\\
\label{eq:L1A}T^{-1}_1(A)= A -\frac{q\lbrack B_\delta, B\rbrack}{\big(q-q^{-1}\big)\big(q^2-q^{-2}\big)}.
\end{gather}
To verify
(\ref{eq:L0B}) (resp.~(\ref{eq:L1A})) eliminate $B_\delta$ using $B_\delta = q^{-2} BA-AB$ and compare the result with~(\ref{eq:L0}) (resp.~(\ref{eq:L1i})). Lines~(\ref{eq:line1}),~(\ref{eq:line3}) follow from~(\ref{eq:L0B}),~(\ref{eq:L1A}) and the construction. Now consider~(\ref{eq:line2}),~(\ref{eq:line4}). First assume that $n=2r+1$ is odd. To verify~(\ref{eq:line2}), apply $(T_0 T_1)^r$ to each side of~(\ref{eq:L0B}), and use~(\ref{eq:fix}) along with $T_1(B)=B$. To verify~(\ref{eq:line4}), apply $(T_0 T_1)^{-r}$ to each side of (\ref{eq:L1A}), and use~(\ref{eq:fix}) along with $T_0(A)=A$. Next assume that $n=2r$ is even. To verify~(\ref{eq:line2}), apply $(T_0 T_1)^r$ to each side of~(\ref{eq:L1A}), and use~(\ref{eq:fix}) along with $T_0(A)=A$, $T_1(B)=B$. To verify~(\ref{eq:line4}), apply $(T_0 T_1)^{-r}$ to each side of~(\ref{eq:L0B}), and use~(\ref{eq:fix}) along with $T_0(A)=A$, $T_1(B)=B$.
\end{proof}

\begin{Lemma}\label{lem:BdeltaC}In the algebra $\Delta_q$,
\begin{gather}
B_{\delta} = q^{-1}\big(q^2-q^{-2}\big)C-q^{-1}\big(q-q^{-1}\big)\gamma.\label{eq:line5}
\end{gather}
\end{Lemma}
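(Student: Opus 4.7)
The plan is to do a direct algebraic simplification of the right-hand side and recognize it as $q^{-2}BA - AB$, which is how $B_\delta$ was defined (the formula $B_\delta = q^{-2}BA - AB$ was stated just after (\ref{eq:Bdelta}) in $\mathcal{O}_q$, and carries over to $\Delta_q$ under $\natural$ since we retain the same notation for images under $\natural$).

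First I would rewrite $C$ using the presentation (\ref{eq:getc}), giving
\[
C = \frac{\gamma}{q+q^{-1}} - \frac{qAB - q^{-1}BA}{q^2-q^{-2}}.
\]
Substituting this into $q^{-1}(q^2-q^{-2})C$ and using the factorization $q^2-q^{-2} = (q-q^{-1})(q+q^{-1})$, the first term becomes $q^{-1}(q-q^{-1})\gamma$ and the second becomes $-q^{-1}(qAB - q^{-1}BA)$.

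Thus
\[
q^{-1}(q^2-q^{-2})C - q^{-1}(q-q^{-1})\gamma = q^{-1}(q-q^{-1})\gamma - q^{-1}\bigl(qAB - q^{-1}BA\bigr) - q^{-1}(q-q^{-1})\gamma,
\]
and the $\gamma$ contributions cancel. What remains is $-AB + q^{-2}BA = q^{-2}BA - AB = B_\delta$, which is the desired identity.

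There is no substantial obstacle here; the only thing to be careful about is the factorization $q^2 - q^{-2} = (q-q^{-1})(q+q^{-1})$, which makes the coefficient of $\gamma$ from the $C$-substitution exactly match the subtracted $\gamma$-term so that they cancel cleanly. The whole proof amounts to a one-line verification once (\ref{eq:getc}) and the definition $B_\delta = q^{-2}BA - AB$ are invoked.
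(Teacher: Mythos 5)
Your proof is correct and is essentially the paper's own argument run in the opposite direction: the paper simplifies (\ref{eq:gamma}) by substituting $qAB-q^{-1}BA=-qB_\delta$ and solving for $B_\delta$, while you substitute (\ref{eq:getc}) (which is just (\ref{eq:gamma}) solved for $C$) into the right-hand side and simplify back to $q^{-2}BA-AB$. The computation is the same one-line verification either way.
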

\begin{proof} Simplify (\ref{eq:gamma}) using $qAB-q^{-1}BA= -q B_\delta$.
\end{proof}

\begin{Lemma}\label{lem:prelim2}In the algebra ${\Delta_q}$,
\begin{gather}
B_{\alpha_0}=A, \qquad B_{\delta+\alpha_0} = B +\frac{\lbrack C, A\rbrack}{q-q^{-1}},\label{eq:pre1}\\
B_{n \delta+\alpha_0} = B_{(n-2)\delta+\alpha_0}+\frac{ \lbrack C, B_{(n-1)\delta+\alpha_0}\rbrack}{q-q^{-1}}, \qquad n\geq 2\label{eq:pre2}
\end{gather}
and also
\begin{gather}
B_{\alpha_1}=B, \qquad B_{\delta+\alpha_1} = A -\frac{ \lbrack C, B\rbrack}{q-q^{-1}},\label{eq:pre3}\\
B_{n \delta+\alpha_1} = B_{(n-2)\delta+\alpha_1}-\frac{\lbrack C, B_{(n-1)\delta+\alpha_1}\rbrack}{q-q^{-1}},\qquad n\geq 2.\label{eq:pre4}
\end{gather}
\end{Lemma}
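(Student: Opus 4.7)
The plan is to obtain the identities in $\Delta_q$ directly by pushing the identities of Lemma~\ref{lem:prelim} through the algebra homomorphism $\natural\colon \mathcal O_q \to \Delta_q$, and then using Lemma~\ref{lem:BdeltaC} to rewrite the resulting occurrences of $B_\delta$ in terms of the generators $C$ and $\gamma$ of $\Delta_q$.

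First, I would apply $\natural$ to each of (\ref{eq:line1})--(\ref{eq:line4}). Because $\natural$ sends $A \mapsto A$ and $B \mapsto B$, and because we have agreed to retain the same notation $B_{n\delta+\alpha_0}$, $B_{n\delta+\alpha_1}$ for the images of these PBW elements in $\Delta_q$, the resulting equations have exactly the same form as in~$\mathcal O_q$, with $B_\delta$ now denoting an element of $\Delta_q$.

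Next, I would substitute the expression from Lemma~\ref{lem:BdeltaC}, namely $B_\delta = q^{-1}(q^2-q^{-2})C - q^{-1}(q-q^{-1})\gamma$, into every commutator $[B_\delta, X]$ appearing in these equations. Expanding by bilinearity of the bracket gives
\begin{gather*}
[B_\delta, X] = q^{-1}\big(q^2-q^{-2}\big)[C,X] - q^{-1}\big(q-q^{-1}\big)[\gamma, X].
\end{gather*}
Since $\gamma$ is central in $\Delta_q$, the second term vanishes. Therefore
\begin{gather*}
\frac{q\,[B_\delta,X]}{\big(q-q^{-1}\big)\big(q^2-q^{-2}\big)} = \frac{q \cdot q^{-1}\big(q^2-q^{-2}\big)}{\big(q-q^{-1}\big)\big(q^2-q^{-2}\big)} [C,X] = \frac{[C,X]}{q-q^{-1}}.
\end{gather*}
Substituting this into the transported versions of (\ref{eq:line1})--(\ref{eq:line4}) yields (\ref{eq:pre1})--(\ref{eq:pre4}), respectively. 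The identities $B_{\alpha_0}=A$ and $B_{\alpha_1}=B$ in $\Delta_q$ are immediate from the table defining the PBW elements, since $\natural(A)=A$ and $\natural(B)=B$.

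There is no real obstacle here; the only point that requires any care is keeping track of the normalizing constants in front of $C$ and $\gamma$ in Lemma~\ref{lem:BdeltaC} and seeing that the $C$-coefficient cancels cleanly against the denominator $(q-q^{-1})(q^2-q^{-2})$ of the recursion, while the $\gamma$-coefficient disappears by centrality. Thus the entire lemma is a routine transfer of Lemma~\ref{lem:prelim} to $\Delta_q$ via (\ref{eq:line5}).
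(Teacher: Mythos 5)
Your proposal is correct and is essentially identical to the paper's own (one-line) proof: the paper also evaluates (\ref{eq:line1})--(\ref{eq:line4}) using (\ref{eq:line5}) together with the centrality of $\gamma$. You have simply written out the cancellation $\frac{q\cdot q^{-1}(q^2-q^{-2})}{(q-q^{-1})(q^2-q^{-2})}=\frac{1}{q-q^{-1}}$ explicitly, which the paper leaves to the reader.
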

\begin{proof} Evaluate (\ref{eq:line1})--(\ref{eq:line4}) using (\ref{eq:line5}) and the fact that $\gamma$ is central in $\Delta_q$.
\end{proof}

\begin{Lemma} \label{lem:CACB}In the algebra $\Delta_q$,
\begin{gather}\label{eq:comCA}
\frac{\lbrack C, A \rbrack}{q-q^{-1}} = -q^{-1} AC -q^{-1} \big(q+q^{-1}\big) B +q^{-1} \beta,\\
\label{eq:comCB}\frac{\lbrack C, B \rbrack}{q-q^{-1}} = q BC +q \big(q+q^{-1}\big) A -q \alpha.
\end{gather}
\end{Lemma}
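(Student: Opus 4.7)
The plan is to derive both identities directly from the defining relations (\ref{eq:alpha}) and (\ref{eq:beta}), which already express certain $q$-twisted commutators of $A,B,C$ in terms of the central elements $\alpha,\beta,\gamma$. The key is to convert the $q$-twisted combinations $qCA-q^{-1}AC$ and $qBC-q^{-1}CB$ into ordinary commutators $[C,A]$ and $[C,B]$ by a one-line algebraic rewriting.

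For the first identity, I would start from (\ref{eq:beta}). Multiplying both sides by $q^2-q^{-2}=(q-q^{-1})(q+q^{-1})$ and rearranging yields
\begin{gather*}
\frac{qCA-q^{-1}AC}{q-q^{-1}} = \beta-\big(q+q^{-1}\big)B.
\end{gather*}
Next I would use the tautology
\begin{gather*}
qCA-q^{-1}AC = q(CA-AC)+\big(q-q^{-1}\big)AC = q\lbrack C,A\rbrack+\big(q-q^{-1}\big)AC,
\end{gather*}
divide through by $q-q^{-1}$, solve for $\lbrack C,A\rbrack/(q-q^{-1})$, and multiply by $q^{-1}$. This gives exactly (\ref{eq:comCA}).

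For the second identity, I would apply the analogous procedure to (\ref{eq:alpha}) to obtain
\begin{gather*}
\frac{qBC-q^{-1}CB}{q-q^{-1}} = \alpha-\big(q+q^{-1}\big)A,
\end{gather*}
and then use the companion rewriting
\begin{gather*}
qBC-q^{-1}CB = -q^{-1}(CB-BC)+\big(q-q^{-1}\big)BC = -q^{-1}\lbrack C,B\rbrack+\big(q-q^{-1}\big)BC.
\end{gather*}
Solving for $\lbrack C,B\rbrack/(q-q^{-1})$ and multiplying by $-q$ yields (\ref{eq:comCB}).

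There is essentially no obstacle: once one has spotted the two rewrites of the $q$-twisted products in terms of ordinary commutators, the rest is a direct substitution. The only thing to watch is the sign and the $q$ vs. $q^{-1}$ asymmetry between the two cases, which is why the final formulas carry a factor $q^{-1}$ in (\ref{eq:comCA}) but $q$ in (\ref{eq:comCB}).
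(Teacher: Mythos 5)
Your proposal is correct and is exactly the paper's argument: the paper simply states that the two identities are a reformulation of (\ref{eq:alpha}) and (\ref{eq:beta}), and your computation spells out that reformulation, with the signs and $q^{\pm 1}$ factors coming out right in both cases.
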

\begin{proof} These equations are a reformulation of~(\ref{eq:alpha}),~(\ref{eq:beta}).
\end{proof}

The following is our first main result.

\begin{Theorem}\label{thm:main1short}For $n\geq 0$ the following hold in~$\Delta_q$:
\begin{gather*}
B_{n\delta+\alpha_0} = (-1)^n q^{-n} A U_n(C)+(-1)^n q^{-n-1} B U_{n-1}(C)+(-1)^n \alpha \sum_{j \in \mathbb N} q^{2j-n+1} U_{n-2j-2}(C)\\
\hphantom{B_{n\delta+\alpha_0} =}{} + (-1)^{n-1} \beta \sum_{j\in \mathbb N} q^{2j-n} U_{n-2j-1}(C),\\
B_{n\delta+\alpha_1} = (-1)^n q^{n} B U_n(C)+ (-1)^n q^{n+1} A U_{n-1}(C)+(-1)^n \beta \sum_{j \in \mathbb N}q^{n-2j-1} U_{n-2j-2}(C)\\
\hphantom{B_{n\delta+\alpha_1} =}{} + (-1)^{n-1}\alpha\sum_{j\in \mathbb N} q^{n-2j} U_{n-2j-1}(C).
\end{gather*}
\end{Theorem}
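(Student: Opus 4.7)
The plan is to prove both formulas by induction on $n$, using the recursions of Lemma~\ref{lem:prelim2}. I will focus on the $B_{n\delta+\alpha_0}$ case; the $B_{n\delta+\alpha_1}$ case is parallel, obtained by interchanging $(A,\alpha) \leftrightarrow (B,\beta)$ and $q \leftrightarrow q^{-1}$, and using (\ref{eq:pre3}), (\ref{eq:pre4}), (\ref{eq:comCB}) in place of (\ref{eq:pre1}), (\ref{eq:pre2}), (\ref{eq:comCA}). For the base cases, with $U_0(C)=1$ and the convention $U_k(C)=0$ for $k<0$, the claimed formula collapses to $A$ when $n=0$, matching (\ref{eq:pre1}); when $n=1$, it collapses to $-q^{-1}AC - q^{-2}B + q^{-1}\beta$, which is obtained directly from (\ref{eq:pre1}) by substituting (\ref{eq:comCA}).

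For the inductive step with $n \geq 2$, I would apply (\ref{eq:pre2}) together with the induction hypothesis. Since $\alpha$ and $\beta$ are central in $\Delta_q$, the commutator $[C, B_{(n-1)\delta+\alpha_0}]/(q-q^{-1})$ annihilates the $\alpha$ and $\beta$ summands of $B_{(n-1)\delta+\alpha_0}$ and reduces to $[C,\cdot]/(q-q^{-1})$ applied only to the $AU_{n-1}(C)$ and $BU_{n-2}(C)$ summands. Expanding these via Lemma~\ref{lem:CACB} and using the Chebyshev three-term recurrence to rewrite $A C U_{n-1}(C) = A(U_n(C) + U_{n-2}(C))$ and $B C U_{n-2}(C) = B(U_{n-1}(C) + U_{n-3}(C))$, then adding $B_{(n-2)\delta+\alpha_0}$ via the induction hypothesis, one compares coefficients in the basis (\ref{eq:mainbasis}): the leading terms $AU_n(C)$ and $BU_{n-1}(C)$ come only from the commutator and yield the desired $(-1)^n q^{-n}$ and $(-1)^n q^{-n-1}$; the $AU_{n-2}(C)$ and $BU_{n-3}(C)$ contributions cancel between the commutator and $B_{(n-2)\delta+\alpha_0}$; and the $j=0$ terms of the claimed $\alpha,\beta$ sums arise from the commutator, while the $j \geq 1$ terms come from the summation index shift $j \mapsto j-1$ in the $\alpha,\beta$ sums of $B_{(n-2)\delta+\alpha_0}$.

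The main obstacle is purely bookkeeping. The cancellation of the $AU_{n-2}(C)$ coefficient requires combining the $-q^{-1}$ from the $-q^{-1}AC$ term of (\ref{eq:comCA}) with the $q(q+q^{-1})$ from the $q(q+q^{-1})A$ term of (\ref{eq:comCB}), producing exactly the $q^{-n+2}$ needed to offset the $AU_{n-2}(C)$ term inherited from $B_{(n-2)\delta+\alpha_0}$; the analogous cancellation for $BU_{n-3}(C)$ is similar but shorter. Verifying that the $j=0$ boundary terms of the $\alpha,\beta$ sums combine correctly with the reindexed inherited sums requires analogous care. Once this index tracking has been carried out the argument is entirely mechanical, and in particular no use of the generating-function identities from Section~3 is needed for this theorem.
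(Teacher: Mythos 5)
Your proposal is correct and takes essentially the same approach as the paper, whose entire proof of Theorem~\ref{thm:main1short} is the one-line statement that it follows ``by a routine induction on $n$, using Lemmas~\ref{lem:prelim2},~\ref{lem:CACB}''. The details you supply --- the centrality of $\alpha$, $\beta$ killing those summands under $\lbrack C,\cdot\rbrack$, the three-term recurrence $CU_{m}(C)=U_{m+1}(C)+U_{m-1}(C)$, the cancellation of the $AU_{n-2}(C)$ and $BU_{n-3}(C)$ contributions against $B_{(n-2)\delta+\alpha_0}$, and the reindexing of the $\alpha$, $\beta$ sums --- all check out, and you are right that the generating functions of Section~3 are not needed here.
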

\begin{proof} By a routine induction on $n$, using Lemmas~\ref{lem:prelim2},~\ref{lem:CACB}.
\end{proof}

The following is our second main result.

\begin{Theorem}\label{thm:v3}In the algebra $\Delta_q$, for $n\geq 1$ the element $B_{n\delta}$ is equal to $(-1)^n\big(1-q^{-2}\big)$ times a weighted sum with the following terms and coefficients:
\begin{gather*}
 \begin{tabular}[t]{c|c}
 {\rm term} & {\rm coefficient} \\
\hline
$\Omega$ & $ \sum\limits_{\ell \in \mathbb N}\lbrack n-2\ell-1 \rbrack_q U_{n-2\ell-2}(C)$\\
$\alpha \beta $ &$ \sum\limits_{\ell \in \mathbb N}\ell^2 \lbrack n-2\ell\rbrack_qU_{n-2\ell-1}(C)$\\
$\alpha^2 + \beta^2$ &$-\sum\limits_{\ell \in \mathbb N}\binom{\ell+1}{2} \lbrack n-2\ell-1\rbrack_q U_{n-2\ell-2}(C)$\\
$\gamma$ &$\lbrack n \rbrack_q U_{n-1}(C)+2\sum\limits_{\ell\in \mathbb N} \lbrack n-2\ell-2\rbrack_q U_{n-2\ell-3}(C)$\\
$1$ &$ -\lbrack n+1 \rbrack_q U_n(C)-\lbrack 3 \rbrack_q \lbrack n-1 \rbrack_q U_{n-2}(C)- \lbrack 2 \rbrack^2_q\sum\limits_{\ell \in \mathbb N } \lbrack n-2\ell-3 \rbrack_q U_{n-2\ell-4}(C)$
\end{tabular}
\end{gather*}
\end{Theorem}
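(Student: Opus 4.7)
The proof should proceed by a direct substitution of the closed forms from Theorem \ref{thm:main1short} into the image under $\natural$ of the recursion (\ref{eq:Bdelta}) for $B_{n\delta}$, followed by careful collection of terms in the basis (\ref{eq:mainbasis}). Write the right-hand side of (\ref{eq:Bdelta}) as $L_n + Q_n$, where $L_n := q^{-2} B_{(n-1)\delta+\alpha_1} A - A B_{(n-1)\delta+\alpha_1}$ is the linear part and $Q_n := (q^{-2}-1)\sum_{\ell=0}^{n-2} B_{\ell\delta+\alpha_1} B_{(n-\ell-2)\delta+\alpha_1}$ is the bilinear part; analyze each separately.

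For $L_n$, substituting the formula from Theorem \ref{thm:main1short} splits $B_{(n-1)\delta+\alpha_1}$ into four families of summands of the shape $BU_{n-1}(C)$, $AU_{n-2}(C)$, and central-element multiples (by $\alpha$ and $\beta$) of polynomials in $C$.  To push $A$ past such terms and return everything to the basis (\ref{eq:mainbasis}) I would use the two $q$-commutation identities $qCA-q^{-1}AC = (q-q^{-1})\beta - (q^2-q^{-2})B$ and $qAB-q^{-1}BA = (q-q^{-1})\gamma - (q^2-q^{-2})C$, which are reformulations of (\ref{eq:alpha}) and (\ref{eq:gamma}), together with the Chebyshev three-term recurrence of Definition \ref{def:Tch}.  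Iterating these yields a clean closed form for $q^{-2}U_k(C)A - AU_k(C)$ (and its analogue with an extra $B$) as a Chebyshev-weighted sum in the basis.

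For $Q_n$, expanding both factors via Theorem \ref{thm:main1short} produces sixteen families of convolutions of Chebyshev polynomials with coefficients in $\{A,B,\alpha,\beta\}\times\{A,B,\alpha,\beta\}$.  After rewriting the quadratic monomials $BA$, $A^2$, $B^2$ in canonical form using the $q$-commutation relations above (and the analogous relation for $qBC-q^{-1}CB$ coming from (\ref{eq:beta})), the $\ell$-sums can be evaluated by combining the Chebyshev generating-function identities Lemmas \ref{lem:gf}, \ref{lem:var}, \ref{lem:Tbrack} with the auxiliary sums of Section 4; the identity of Lemma \ref{lem:ident3} is precisely the right tool for converting products $\lbrack r\rbrack_q\lbrack s\rbrack_q$ that arise from the $\alpha\alpha$, $\beta\beta$, and $\alpha\beta$ sub-convolutions into the single-$\lbrack\cdot\rbrack_q$ expressions displayed in the theorem.

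Finally, adding $L_n$ and $Q_n$, the coefficients of $qABC$, $q^2A^2$, $q^{-2}B^2$, $q^2C^2$ should assemble into exactly the Casimir $\Omega$ of (\ref{eq:Cas}) (with the correct $A\alpha$, $B\beta$, $C\gamma$ corrections absorbed into the $\gamma$ and the scalar rows), while the remaining central contributions reorganize into the $\alpha\beta$, $\alpha^2+\beta^2$, $\gamma$, and scalar rows of the table.  The main obstacle will be the bookkeeping in $Q_n$: in particular, matching the $\alpha^2+\beta^2$ row (whose $\binom{\ell+1}{2}$ coefficient should appear from the fourth auxiliary summation of Section 4) and the scalar row (whose coefficient contains the telltale combination $\lbrack 3\rbrack_q\lbrack n-1\rbrack_q$ indicating a nontrivial recombination of convolution terms) requires careful, repeated application of Lemma \ref{lem:ident3}.
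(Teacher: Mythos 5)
Your starting point---substituting the closed forms of Theorem~\ref{thm:main1short} into the image of the recursion (\ref{eq:Bdelta}) and normal-ordering the result---is the same as the paper's, but the paper's proof turns on one specific device that your outline lacks, and without it the hardest step of your plan does not go through as described. The paper packages everything into the generating functions $\Phi(t)=\sum_n t^nB_{n\delta+\alpha_1}$ and $\Psi(t)=\sum_n t^nB_{n\delta}$, so that (\ref{eq:Bdelta}) becomes the single identity (\ref{eq:BdeltaGF}); it then multiplies $\Psi(t)$ on the left by $q^{-1}t+qt^{-1}+C$ and on the right by $qt+q^{-1}t^{-1}+C$. By (\ref{eq:PhiC}) and (\ref{eq:CPhi}) (which is exactly where Theorem~\ref{thm:main1short} and Lemmas~\ref{lem:gf}, \ref{lem:var} enter), each occurrence of $\Phi(t)$ then collapses to a degree-one expression in $A$, $B$, $\alpha$, $\beta$, so the quadratic term $(\Phi(t))^2$ is never expanded as a double Chebyshev convolution; the whole right-hand side reduces to a short polynomial computation using the listed commutation relations and the $BAC$, $CAB$ formulas from \cite[Lemma~6.1]{uaw}, and the table is recovered by expanding the resulting rational functions $F_1,\dots,F_5$ back into power series via Lemma~\ref{lem:Tbrack} and the auxiliary sums of Section~4.

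Your plan, by contrast, proposes to expand $Q_n$ at fixed $n$ into ``sixteen families of convolutions of Chebyshev polynomials,'' and that is where the genuine gap lies. Those convolutions are finite sums over the index $\ell$ of products $U_i(C)U_j(C)$ with $i+j$ fixed; they are not power series in $t$, so Lemmas~\ref{lem:gf}, \ref{lem:var}, \ref{lem:Tbrack} do not apply to them directly. To evaluate them you would need the linearization $U_iU_j=\sum_kU_{i+j-2k}$ and closed forms for the resulting double sums of $q$-brackets, none of which you supply; and Lemma~\ref{lem:ident3}, an identity among triple products of $q$-brackets that the paper uses only in the induction for Proposition~\ref{prop:TCA}, is not obviously the right tool for that job. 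Similarly, the ``clean closed form for $q^{-2}U_k(C)A-AU_k(C)$'' you invoke for $L_n$ is essentially Proposition~\ref{prop:TCA}, which the paper proves by a separate, nontrivial induction; asserting that it follows ``by iterating'' the degree-one commutation relations is not a proof. The strategy could likely be completed once these ingredients are added, but as written the decisive computations are absent---and the moment you evaluate the convolutions by multiplying generating functions, you will have reconstructed the paper's argument in any case.
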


\begin{proof}We have some preliminary comments. Using (\ref{eq:beta}), (\ref{eq:gamma}),
\begin{gather*}
BA = q^2 AB+q\big(q^2-q^{-2}\big)C - q\big(q-q^{-1}\big) \gamma,\\
CA = q^{-2} AC-q^{-1}\big(q^2-q^{-2}\big)B + q^{-1}\big(q-q^{-1}\big) \beta,\\
CA^2 = q^{-4} A^2C - q^{-1} \big(q^4-q^{-4}\big) AB + q^{-2} \big(q^2-q^{-2}\big)A\beta\\
\hphantom{CA^2 =}{} -\big(q^2-q^{-2}\big)^2 C + \big(q-q^{-1}\big)\big(q^2-q^{-2}\big)\gamma.
\end{gather*}
By \cite[Lemma~6.1]{uaw},
\begin{gather*}
BAC = q \Omega - q^3 A^2 - q^{-1} B^2 - q^{-1} C^2 + q^2 A \alpha + B\beta + C \gamma,\\
CAB = q^{-1} \Omega - q^{-3} A^2 - q B^2 - q C^2 + q^{-2} A \alpha+ B\beta + C \gamma.
\end{gather*}
We are done with the preliminary comments. We now define some generating functions in an indeterminate $t$:
\begin{gather}\label{eq:genf}
\Phi(t) = \sum_{n=0}^\infty t^n B_{n \delta + \alpha_1},\qquad\Psi(t) = \sum_{n=1}^\infty t^n B_{n \delta}.
\end{gather}
By (\ref{eq:Bdelta}),
\begin{gather}
\Psi(t) = q^{-2} t \Phi(t) A - t A \Phi(t) + \big(q^{-2}-1\big) t^2 (\Phi(t))^2. \label{eq:BdeltaGF}
\end{gather}
By (\ref{eq:pre3}), (\ref{eq:pre4}),
\begin{gather}
\frac{\lbrack C, \Phi(t) \rbrack}{q-q^{-1}} = A + t^{-1} B + \big(t-t^{-1}\big) \Phi(t).\label{eq:comGF}
\end{gather}
We next consider what the second equation in Theorem~\ref{thm:main1short} implies about $\Phi(t)$. Using Lem\-ma~\ref{lem:gf},
\begin{gather*}
\sum_{n \in \mathbb N}(-1)^n q^n t^n U_n(x)=\frac{q^{-1} t^{-1}} {qt+q^{-1} t^{-1} + x }.
\end{gather*}
Using Lemma \ref{lem:var},
\begin{gather*}
\sum_{n \in \mathbb N}(-1)^n q^{n+1} t^n U_{n-1}(x)=\frac{-q}{qt+q^{-1} t^{-1} + x}.
\end{gather*}
We have
\begin{gather*}
\sum_{n \in \mathbb N}(-1)^n t^n \sum_{j \in \mathbb N}q^{n-2j-1} U_{n-2j-2}(x)\\
\qquad{} = \sum_{n \in \mathbb N}\sum_{j\in \mathbb N}(-1)^n t^n q^{n-2j-1} U_{n-2j-2}(x)\\
 \qquad {} =-\sum_{n \in \mathbb N} \sum_{j\in \mathbb N}(-1)^{n-2j-1} t^{n-2j-1}q^{n-2j-1} U_{n-2j-2}(x) t^{2j+1}\\
\qquad {} =- \sum_{N \in \mathbb N} \sum_{j \in \mathbb N} (-1)^{N} t^{N}q^N U_{N-1}(x) t^{2j+1}\qquad {\mbox{\rm (change var. $N=n-2j-1$)}}\\
 \qquad {} = - \Biggl( \sum_{N \in \mathbb N}(-1)^{N} t^{N}q^N U_{N-1}(x)\Biggr)\Biggl(\sum_{j \in \mathbb N} t^{2j+1} \Biggr)\\
 \qquad {} = \frac{-1}{qt+q^{-1}t^{-1} + x}\frac{1}{t-t^{-1}}\\
\qquad {} = \frac{-1}{\big(t-t^{-1}\big)\big(qt+q^{-1} t^{-1} + x\big)}.
\end{gather*}
Similarly,
\begin{gather*}
\sum_{n \in \mathbb N}(-1)^{n-1} t^n \sum_{j \in \mathbb N} q^{n-2j} U_{n-2j-1}(x)=\frac{-t^{-1}}{\big(t-t^{-1}\big)\big(qt+q^{-1} t^{-1} + x\big)}.
\end{gather*}
By these comments and the second equation in Theorem~\ref{thm:main1short},
\begin{gather}
\Phi(t) \big(qt+q^{-1} t^{-1} + C\bigr) = q^{-1} t^{-1} B - q A - \frac{\beta}{t-t^{-1}}- \frac{t^{-1} \alpha}{t-t^{-1}}.\label{eq:PhiC}
\end{gather}
By (\ref{eq:comGF}) and (\ref{eq:PhiC}),
\begin{gather}
\big(q^{-1} t+q t^{-1} + C \big)\Phi(t) =q t^{-1} B - q^{-1} A - \frac{\beta}{t-t^{-1}}- \frac{t^{-1} \alpha}{t-t^{-1}}.\label{eq:CPhi}
\end{gather}
In (\ref{eq:BdeltaGF}), we multiply each side on the left by $q^{-1} t + q t^{-1} + C$ and on the right by $q t + q^{-1} t^{-1} + C$. We evaluate the result using~(\ref{eq:PhiC}),~(\ref{eq:CPhi}) to obtain
\begin{gather*}
\bigl(q^{-1} t + q t^{-1} + C\bigr)\Psi(t)\bigl(q t + q^{-1} t^{-1} + C\bigr)\\
\qquad{} = q^{-2} t \left(q t^{-1} B - q^{-1} A - \frac{\beta}{t-t^{-1}}- \frac{t^{-1} \alpha}{t-t^{-1}} \right)A\bigl(q t + q^{-1} t^{-1} + C\bigr)\\
\qquad\quad{} -t \bigl(q^{-1} t + q t^{-1} + C\bigr)A\left(q^{-1} t^{-1} B - q A - \frac{\beta}{t-t^{-1}}- \frac{t^{-1} \alpha}{t-t^{-1}}\right)\\
\qquad\quad{} + \big(q^{-2}-1\big)t^2 \left(q t^{-1} B - q^{-1} A - \frac{\beta}{t-t^{-1}}- \frac{t^{-1} \alpha}{t-t^{-1}}\right)\\
\qquad\quad{}\times \left(q^{-1} t^{-1} B - q A - \frac{\beta}{t-t^{-1}} - \frac{t^{-1} \alpha}{t-t^{-1}}\right).
\end{gather*}
Evaluating the above equation using the preliminary comments, we find that
\begin{gather}
\bigl(q^{-1} t + q t^{-1} + C\bigr)\Psi(t) \bigl(q t + q^{-1} t^{-1} + C\bigr)\label{eq:3}
\end{gather}
is equal to $1-q^{-2}$ times
\begin{gather*}
 \Omega - \frac{\big(t+t^{-1}\big) \alpha \beta }{\big(t-t^{-1}\big)^2}-\frac{\alpha^2+ \beta^2}{\big(t-t^{-1}\big)^2}- \big(t+t^{-1}\big)\gamma+\big(q+q^{-1}\big)\big(t+t^{-1}\big) C + C^2.
\end{gather*}
Consequently $\Psi(t)$ is equal to $1-q^{-2}$ times
\begin{gather*}
F_1(t,C) \Omega+ F_2(t,C) \alpha \beta +F_3(t,C) \big(\alpha^2+\beta^2\big)+F_4(t,C) \gamma+F_5(t,C),
\end{gather*}
where
\begin{gather*}
F_1(t,x) = \frac{1}{\big(qt+q^{-1}t^{-1} + x\big) \big(q^{-1}t+qt^{-1} + x\big)},\\
F_2(t,x) = -\frac{t+t^{-1}}{\big(t-t^{-1}\big)^2\big(qt+q^{-1}t^{-1} + x\big)\big(q^{-1}t+qt^{-1} + x\big)},\\
F_3(t,x) = \frac{-1}{\big(t-t^{-1}\big)^2\big(qt+q^{-1}t^{-1} + x\big)\big(q^{-1}t+qt^{-1} + x\big)},\\
F_4(t,x) = -\frac{t+t^{-1}}{\big(qt+q^{-1}t^{-1} + x\big)\big(q^{-1}t+qt^{-1} + x\big)},\\
F_5(t,x) = \frac{\big(q+q^{-1}\big)\big(t+t^{-1}\big)x+x^2}{\big(qt+q^{-1}t^{-1} + x\big)\big(q^{-1}t+qt^{-1} + x\big)}.
\end{gather*}
We now compare the $\lbrace F_i\rbrace_{i=1}^5$ with the coefficients shown in the table of the theorem statement.

{\allowdisplaybreaks Concerning $F_1$,
\begin{gather*}
 \sum_{n=1}^\infty (-1)^n t^n \sum_{\ell \in \mathbb N} \lbrack n-2\ell-1\rbrack_q U_{n-2\ell-2}(x)\\
\qquad{} = \sum_{n=1}^\infty \sum_{\ell \in \mathbb N}(-1)^{n} t^{n}\lbrack n-2\ell-1\rbrack_q U_{n-2\ell-2}(x)\\
\qquad{} = - \sum_{n=1}^\infty \sum_{\ell \in \mathbb N}(-1)^{n-2\ell-1} t^{n-2\ell-1}\lbrack n-2\ell-1\rbrack_q U_{n-2\ell-2}(x)t^{2\ell+1}\\
\qquad{} = - \sum_{N \in \mathbb N} \sum_{\ell \in \mathbb N}(-1)^{N} t^{N}\lbrack N \rbrack_q U_{N-1}(x) t^{2\ell+1}\qquad {\mbox{\rm (change var. $N=n-2\ell-1$)}}\\
\qquad{} = - \Biggl( \sum_{N \in \mathbb N}(-1)^{N} t^{N}\lbrack N \rbrack_q U_{N-1}(x)\Biggr)\Biggl(\sum_{\ell \in \mathbb N} t^{2\ell+1} \Biggr)\\
\qquad{} = \frac{t-t^{-1}}{\big(qt+q^{-1}t^{-1} + x\big)\big(q^{-1}t+qt^{-1} + x\big)}\frac{1}{t-t^{-1}}\\
\qquad{} = \frac{1}{\big(qt+q^{-1}t^{-1} + x\big)\big(q^{-1}t+qt^{-1} + x\big)}\\
\qquad{} = F_1(t,x).
\end{gather*}
Concerning $F_2$,
\begin{gather*}
 \sum_{n=1}^\infty (-1)^n t^n \sum_{\ell \in \mathbb N} \ell^2\lbrack n-2\ell\rbrack_q U_{n-2\ell-1}(x)\\
\qquad{} = \sum_{n=1}^\infty \sum_{\ell \in \mathbb N}(-1)^{n} t^{n}\lbrack n-2\ell\rbrack_q U_{n-2\ell-1}(x) \ell^2\\
\qquad{} = \sum_{n=1}^\infty \sum_{\ell \in \mathbb N}(-1)^{n-2\ell} t^{n-2\ell}\lbrack n-2\ell\rbrack_q U_{n-2\ell-1}(x) \ell^2 t^{2\ell}\\
\qquad{} = \sum_{N \in \mathbb N} \sum_{\ell \in \mathbb N}(-1)^{N} t^{N}\lbrack N \rbrack_q U_{N-1}(x) \ell^2 t^{2\ell} \qquad {\mbox{\rm (change var. $N=n-2\ell$)}}\\
\qquad{} = \Biggl( \sum_{N \in \mathbb N}(-1)^{N} t^{N}\lbrack N \rbrack_q U_{N-1}(x)\Biggr)\Biggl(\sum_{\ell \in \mathbb N}\ell^2 t^{2\ell} \Biggr)\\
\qquad{} = -\frac{t-t^{-1}}{\big(qt+q^{-1}t^{-1} + x\big)\big(q^{-1}t+qt^{-1} + x\big)}\frac{t+t^{-1}}{\big(t-t^{-1}\big)^3}\\
\qquad{} = -\frac{t+t^{-1}}{\big(t-t^{-1}\big)^2 \big(qt+q^{-1}t^{-1} + x\big)\big(q^{-1}t+qt^{-1} + x\big)}\\
\qquad{} = F_2(t,x).
\end{gather*}
Concerning $F_3$,
\begin{gather*}
 -\sum_{n=1}^\infty (-1)^n t^n \sum_{\ell \in \mathbb N}\binom{\ell+1}{2} \lbrack n-2\ell-1\rbrack_q U_{n-2\ell-2}(x)\\
\qquad{} = -\sum_{n=1}^\infty \sum_{\ell \in \mathbb N}(-1)^{n} t^{n}\lbrack n-2\ell-1\rbrack_q U_{n-2\ell-2}(x) \binom{\ell+1}{2}\\
\qquad{} = \sum_{n=1}^\infty \sum_{\ell \in \mathbb N}(-1)^{n-2\ell-1} t^{n-2\ell -1}\lbrack n-2\ell-1\rbrack_q U_{n-2\ell-2}(x) \binom{\ell+1}{2} t^{2\ell+1}\\
\qquad{} = \sum_{N \in \mathbb N} \sum_{\ell \in \mathbb N}(-1)^{N} t^{N}\lbrack N \rbrack_q U_{N-1}(x) \binom{\ell+1}{2} t^{2\ell+1} \qquad \mbox{\rm (change var. $N=n-2\ell-1$)}\\
\qquad{} = \Biggl( \sum_{N \in \mathbb N}(-1)^{N} t^{N}\lbrack N \rbrack_q U_{N-1}(x)\Biggr)\Biggl(\sum_{\ell \in \mathbb N}\binom{\ell+1}{2} t^{2\ell+1} \Biggr)\\
\qquad{} = \frac{t-t^{-1}}{\big(qt+q^{-1}t^{-1} + x\big)\big(q^{-1}t+qt^{-1} + x\big)}\frac{-1}{\big(t-t^{-1}\big)^3}\\
\qquad{} = \frac{-1}{\big(t-t^{-1}\big)^2 \big(qt+q^{-1}t^{-1} + x\big)\big(q^{-1}t+qt^{-1} + x\big)}\\
\qquad{} = F_3(t,x).
\end{gather*}
Concerning $F_4$,
\begin{gather}
 \sum_{n=1}^\infty (-1)^n t^n \lbrack n\rbrack_q U_{n-1}(x)\nonumber\\
 \qquad {} =\sum_{n\in \mathbb N} (-1)^n t^n \lbrack n\rbrack_q U_{n-1}(x)\nonumber\\
\qquad{} = \frac{t-t^{-1}}{\big(qt+q^{-1}t^{-1} + x\big)\big(q^{-1}t+qt^{-1} + x\big)}\label{eq:n1}
\end{gather}
and also
\begin{gather}
 \sum_{n=1}^\infty (-1)^n t^n \sum_{\ell \in \mathbb N}\lbrack n-2\ell-2\rbrack_q U_{n-2\ell-3}(x)\nonumber\\
\qquad{} = \sum_{n=1}^\infty \sum_{\ell \in \mathbb N}(-1)^{n} t^{n}\lbrack n-2\ell-2\rbrack_q U_{n-2\ell-3}(x)\nonumber\\
\qquad{} = \sum_{n=1}^\infty \sum_{\ell \in \mathbb N}(-1)^{n-2\ell-2} t^{n-2\ell-2}\lbrack n-2\ell-2\rbrack_q U_{n-2\ell-3}(x) t^{2\ell+2} \nonumber\\
\qquad{} = \sum_{N \in \mathbb N} \sum_{\ell \in \mathbb N}(-1)^{N} t^{N}\lbrack N \rbrack_q U_{N-1}(x) t^{2\ell+2} \qquad {\mbox{\rm (change var. $N=n-2\ell-2$)}}\nonumber\\
\qquad{} = \Biggl( \sum_{N \in \mathbb N}(-1)^{N} t^{N}\lbrack N \rbrack_q U_{N-1}(x)\Biggr)\Biggl(\sum_{\ell \in \mathbb N} t^{2\ell+2} \Biggr)\nonumber\\
\qquad{} = -\frac{t-t^{-1}}{\big(qt+q^{-1}t^{-1} + x\big)\big(q^{-1}t+qt^{-1} + x\big)}\frac{ t}{t-t^{-1}}\nonumber\\
\qquad{} = -\frac{t}{\big(qt+q^{-1}t^{-1} + x\big)\big(q^{-1}t+qt^{-1} + x\big)}.\label{eq:n2}
\end{gather}
Note that (\ref{eq:n1}) plus twice (\ref{eq:n2}) is equal to $F_4(t,x)$.

Concerning $F_5$,
\begin{gather}
 \sum_{n=1}^\infty (-1)^n t^n \lbrack n+1\rbrack_q U_{n}(x)\nonumber\\
\qquad{} =-1+\sum_{n\in \mathbb N} (-1)^{n} t^{n} \lbrack n+1\rbrack_q U_{n}(x)\nonumber\\
\qquad{} =-1-t^{-1}\sum_{n\in \mathbb N} (-1)^{n+1} t^{n+1} \lbrack n+1\rbrack_q U_{n}(x)\nonumber\\
\qquad{} = -1-t^{-1} \sum_{N \in \mathbb N}(-1)^{N} t^{N}\lbrack N \rbrack_q U_{N-1}(x) \qquad {\mbox{\rm (change var. $N=n+1$)}}\nonumber\\
\qquad{} = -1 - \frac{t^{-1}\big(t-t^{-1}\big)}{\big(qt+q^{-1}t^{-1} + x\big)\big(q^{-1}t+qt^{-1} + x\big)}\label{eq:m1}
\end{gather}
and also
\begin{gather}
 \sum_{n=1}^\infty (-1)^n t^n \lbrack n-1\rbrack_q U_{n-2}(x)\nonumber\\
\qquad{} =-t\sum_{n=1}^{\infty} (-1)^{n-1} t^{n-1} \lbrack n-1\rbrack_q U_{n-2}(x)\nonumber\\
\qquad{} = -t \sum_{N \in \mathbb N}(-1)^{N} t^{N}\lbrack N \rbrack_q U_{N-1}(x) \qquad {\mbox{\rm (change var. $N=n-1$)}}\nonumber\\
\qquad{} = -\frac{t\big(t-t^{-1}\big)}{\big(qt+q^{-1}t^{-1} + x\big)\big(q^{-1}t+qt^{-1} + x\big)},\label{eq:m2}
\end{gather}
and also
\begin{gather}
 \sum_{n=1}^\infty (-1)^n t^n \sum_{\ell \in \mathbb N}\lbrack n-2\ell-3\rbrack_q U_{n-2\ell-4}(x)\nonumber\\
\qquad{} = \sum_{n=1}^\infty \sum_{\ell \in \mathbb N}(-1)^{n} t^{n}\lbrack n-2\ell-3\rbrack_qU_{n-2\ell-4}(x)\nonumber\\
\qquad{} = - \sum_{n=1}^\infty \sum_{\ell \in \mathbb N}(-1)^{n-2\ell-3} t^{n-2\ell-3}\lbrack n-2\ell-3\rbrack_q U_{n-2\ell-4}(x)t^{2\ell+3}\nonumber\\
\qquad{} = - \sum_{N \in \mathbb N} \sum_{\ell \in \mathbb N}(-1)^{N} t^{N}\lbrack N \rbrack_q U_{N-1}(x) t^{2\ell+3} \qquad {\mbox{\rm (change var. $N=n-2\ell-3$)}}\nonumber\\
\qquad{} = - \Biggl( \sum_{N \in \mathbb N}(-1)^{N} t^{N}\lbrack N \rbrack_q U_{N-1}(x)\Biggr)\Biggl(\sum_{\ell \in \mathbb N} t^{2\ell+3} \Biggr)\nonumber\\
\qquad{} = \frac{t-t^{-1}}{\big(qt+q^{-1}t^{-1} + x\big)\big(q^{-1}t+qt^{-1} + x\big)}\frac{t^2}{t-t^{-1}}\nonumber\\
\qquad{} = \frac{t^2}{\big(qt+q^{-1}t^{-1} + x\big)\big(q^{-1}t+qt^{-1} + x\big)}.\label{eq:m3}
\end{gather}
Note that $(-1)$ times (\ref{eq:m1}) minus $\lbrack 3 \rbrack_q$ times (\ref{eq:m2}) minus $\lbrack 2 \rbrack_q^2$ times (\ref{eq:m3}) is equal to $F_5(t,x)$. The result follows from the above comments.}
\end{proof}

Recall the center $Z(\Delta_q)$.
\begin{Corollary} For $n\geq 1$ the element $B_{n\delta}$ is contained in the subalgebra of $\Delta_q$ generated by $C$ and $Z(\Delta_q)$.
\end{Corollary}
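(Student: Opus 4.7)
The plan is to read the corollary off directly from Theorem~\ref{thm:v3}. That theorem expresses $B_{n\delta}$ as $(-1)^n(1-q^{-2})$ times a sum of five terms of the form (coefficient)$\cdot T$, where $T$ ranges over $\Omega$, $\alpha\beta$, $\alpha^2+\beta^2$, $\gamma$, and $1$. Each of $\Omega$, $\alpha$, $\beta$, $\gamma$ lies in $Z(\Delta_q)$, so each $T$ is central. It therefore suffices to verify that every coefficient appearing in the table is a polynomial in $C$ with scalar coefficients, so that the whole expression lies in the subalgebra generated by $C$ together with $Z(\Delta_q)$.

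First I would observe that each coefficient is built out of expressions of the form $U_k(C)$ multiplied by scalars of the shape $[m]_q$, $\ell^2$, $\binom{\ell+1}{2}$, or $[m]_q^2$. Since $U_k \in \mathbb{F}[x]$ by Definition~\ref{def:Tch}, each $U_k(C)$ is a polynomial in $C$ with coefficients in $\mathbb{F}$. The sums over $\ell \in \mathbb{N}$ are only apparently infinite: by the convention $U_k = 0$ for $k<0$ adopted in Definition~\ref{def:Tch}, the summand indexed by $\ell$ vanishes once the index of the Chebyshev polynomial becomes negative, which happens as soon as $\ell$ exceeds a bound depending linearly on $n$. Hence each sum is in fact finite, and each coefficient in the table lies in $\mathbb{F}[C]$.

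Putting these pieces together, Theorem~\ref{thm:v3} exhibits $B_{n\delta}$ as an $\mathbb{F}$-linear combination of the central elements $\Omega$, $\alpha\beta$, $\alpha^2+\beta^2$, $\gamma$, and $1$, with coefficients in $\mathbb{F}[C]$. Since $\Omega$, $\alpha$, $\beta$, $\gamma$ generate central elements of $\Delta_q$ and commute with $C$, every term in the sum belongs to the subalgebra of $\Delta_q$ generated by $C$ and $Z(\Delta_q)$. Therefore so does $B_{n\delta}$, proving the corollary.

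There is no serious obstacle here; the entire work has already been done in Theorem~\ref{thm:v3}. The only care needed is the bookkeeping remark about finiteness of the sums over $\ell$, which follows from the convention $U_k = 0$ for $k<0$ fixed at the outset.
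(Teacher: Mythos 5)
Your proof is correct and is exactly the argument the paper intends: the corollary is stated as an immediate consequence of Theorem~\ref{thm:v3}, since that theorem writes $B_{n\delta}$ as a combination of the central elements $\Omega$, $\alpha\beta$, $\alpha^2+\beta^2$, $\gamma$, $1$ with coefficients in $\mathbb{F}[C]$ (the sums being finite by the convention $U_k=0$ for $k<0$). Your added remark on the finiteness of the sums is a reasonable piece of bookkeeping that the paper leaves implicit.
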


We finish the paper with some comments.

Here is another version of Theorem~\ref{thm:main1short}.

\begin{Proposition} \label{thm:main1alt} For $n\geq 0$ the following hold in~$\Delta_q$:
\begin{gather*}\allowdisplaybreaks
B_{n\delta+\alpha_0} = (-1)^n q^{n} U_n(C) A+(-1)^n q^{n+1} U_{n-1}(C) B+(-1)^n \alpha \sum_{j \in \mathbb N} q^{n-2j-1} U_{n-2j-2}(C)\\
\hphantom{B_{n\delta+\alpha_0} =}{} + (-1)^{n-1} \beta\sum_{j\in \mathbb N} q^{n-2j} U_{n-2j-1}(C),\\
B_{n\delta+\alpha_1} = (-1)^n q^{-n} U_n(C) B+ (-1)^n q^{-n-1} U_{n-1}(C) A+(-1)^n \beta \sum_{j \in \mathbb N}q^{2j-n+1} U_{n-2j-2}(C)\\
\hphantom{B_{n\delta+\alpha_1} =}{} + (-1)^{n-1}\alpha\sum_{j\in \mathbb N} q^{2j-n} U_{n-2j-1}(C).
\end{gather*}
\end{Proposition}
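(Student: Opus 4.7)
My plan is to derive Proposition~\ref{thm:main1alt} from generating-function identities parallel to those used in the proof of Theorem~\ref{thm:v3}, but with the Chebyshev-type factor $(\star t+\star^{-1}t^{-1}+C)$ inverted from the left rather than from the right. Write $\Phi(t)=\sum_{n\geq 0}t^n B_{n\delta+\alpha_1}$ and $\Phi_0(t)=\sum_{n\geq 0}t^n B_{n\delta+\alpha_0}$.

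For the second equation of the proposition, I start from (\ref{eq:CPhi}), which places $(q^{-1}t+qt^{-1}+C)$ on the left of $\Phi(t)$, and multiply both sides on the left by its inverse. By Lemma~\ref{lem:var} this inverse admits the formal expansion $1/(q^{-1}t+qt^{-1}+C)=\sum_{m\geq 0}(-1)^m q^{-m-1}t^{m+1}U_m(C)$, with the Chebyshev polynomials $U_m(C)$ sitting on the left of the downstream factors. I distribute this expansion through the right-hand side of (\ref{eq:CPhi}), expand $1/(t-t^{-1})=-\sum_{\ell\geq 0}t^{2\ell+1}$ for the $\alpha,\beta$ pieces, and re-index $n=m+2\ell+c$ with $c\in\{0,1,2\}$ depending on the term. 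The coefficient of $t^n$ is then precisely the formula in the second equation of Proposition~\ref{thm:main1alt}.

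For the first equation I first produce an analog of (\ref{eq:CPhi}) for $\Phi_0(t)$. Mirroring the derivation of (\ref{eq:comGF}) from (\ref{eq:pre3})--(\ref{eq:pre4}), the recursions (\ref{eq:pre1})--(\ref{eq:pre2}) yield $[C,\Phi_0(t)]/(q-q^{-1})=-t^{-1}A-B-(t-t^{-1})\Phi_0(t)$, which rearranges to $(qt+q^{-1}t^{-1}+C)\Phi_0(t)=\Phi_0(t)(q^{-1}t+qt^{-1}+C)-(q-q^{-1})(t^{-1}A+B)$. Meanwhile, packaging Theorem~\ref{thm:main1short}'s first equation into a generating function (by the same calculation used in the proof of Theorem~\ref{thm:v3} for $\Phi(t)$) gives $\Phi_0(t)(q^{-1}t+qt^{-1}+C)=qt^{-1}A-q^{-1}B-\alpha/(t-t^{-1})-t^{-1}\beta/(t-t^{-1})$. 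Substituting into the previous identity and absorbing the $(q-q^{-1})$ cross-term collapses the $A,B$ coefficients to produce the clean identity $(qt+q^{-1}t^{-1}+C)\Phi_0(t)=q^{-1}t^{-1}A-qB-\alpha/(t-t^{-1})-t^{-1}\beta/(t-t^{-1})$. Now invert $(qt+q^{-1}t^{-1}+C)$ on the left using Lemma~\ref{lem:var} (this time giving $1/(qt+q^{-1}t^{-1}+C)=-\sum_{n\geq 0}(-1)^n q^n t^n U_{n-1}(C)$) and repeat the coefficient-matching procedure of the previous paragraph.

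The main obstacle is the bookkeeping in the double-sum re-indexing for the $\alpha$ and $\beta$ terms: after the substitutions $n=m+2\ell+c$, the combined $q$-exponents must collapse exactly to the target exponents $q^{n-2j-1}$, $q^{n-2j}$, $q^{2j-n+1}$, $q^{2j-n}$ displayed in the proposition, while the range $j\in\mathbb N$ is automatically enforced by our convention $U_k=0$ for $k<0$ together with the support of the Chebyshev expansion of the inverse factor. The mirrored $q$-powers between Proposition~\ref{thm:main1alt} and Theorem~\ref{thm:main1short} ($q^n$ vs.\ $q^{-n}$, $q^{n+1}$ vs.\ $q^{-n-1}$, and the swap of $A,B$ to the opposite side of $U_m(C)$) are precisely the footprint of inverting the factor $(qt+q^{-1}t^{-1}+C)$ instead of $(q^{-1}t+qt^{-1}+C)$, which is the only structural change from the Theorem to the Proposition.
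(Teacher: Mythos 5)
Your proposal is correct, and I verified the coefficient extractions: inverting $(q^{-1}t+qt^{-1}+C)$ on the left of (\ref{eq:CPhi}) via Lemma~\ref{lem:var} does reproduce the second displayed formula term by term, and your intermediate identity $(qt+q^{-1}t^{-1}+C)\Phi_0(t)=\Phi_0(t)(q^{-1}t+qt^{-1}+C)-(q-q^{-1})(t^{-1}A+B)$, combined with the generating-function form of the first equation of Theorem~\ref{thm:main1short}, collapses correctly to $(qt+q^{-1}t^{-1}+C)\Phi_0(t)=q^{-1}t^{-1}A-qB-\alpha/(t-t^{-1})-t^{-1}\beta/(t-t^{-1})$, which is precisely the identity (\ref{eq:CPhi2}) appearing later in the paper. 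However, your route is genuinely different from the paper's. The paper proves the Proposition the same way it proves Theorem~\ref{thm:main1short}: a direct induction on $n$ using the recursions of Lemma~\ref{lem:prelim2} and the commutators of Lemma~\ref{lem:CACB}; in particular the two results are established independently and in parallel. You instead take Theorem~\ref{thm:main1short} as input and transport it across the commutation relations (\ref{eq:comGF}), (\ref{eq:comGF2}) by formally inverting the quadratic factors $qt+q^{-1}t^{-1}+C$ and $q^{-1}t+qt^{-1}+C$ (legitimate, since each equals $t^{-1}$ times a power series with invertible constant term, and these factors are central in the subalgebra generated by $C$). What your approach buys is a conceptual explanation of the mirrored $q$-powers and the side-switching of $A$, $B$ relative to $U_m(C)$: the Proposition is exactly the Theorem read through the opposite one-sided inverse, and the identities you pass through are the ones the paper needs anyway for Theorem~\ref{thm:v3}. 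What it costs is self-containedness: your proof of the Proposition is contingent on Theorem~\ref{thm:main1short} (harmless here, as the Theorem comes first), whereas the paper's induction would let a reader verify either result without the other.
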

\begin{proof} Similar to the proof of Theorem~\ref{thm:main1short}.
\end{proof}

The following result might be of independent interest.

\begin{Proposition} \label{prop:TCA}For $n\geq 1$ the following holds in $\Delta_q$:
\begin{gather*}
U_n(C) A = q^{-2n}A U_n(C)-q^2 \big(q-q^{-1}\big)A \sum_{\ell \in \mathbb N}\lbrack 2n-4\ell-2\rbrack_q U_{n-2\ell-2}(C)\\
\hphantom{U_n(C) A =}{} - q^{-1} \big(q-q^{-1}\big) B\sum_{\ell \in \mathbb N} \lbrack 2n-4\ell\rbrack_q U_{n-2\ell-1}(C)\\
\hphantom{U_n(C) A =}{}+ \big(q-q^{-1}\big)^2 \alpha \sum_{\ell \in \mathbb N} \lbrack n-2\ell-1\rbrack_q\lbrack \ell+1 \rbrack_q\lbrack {n-\ell}\rbrack_qU_{n-2\ell-2}(C)\\
\hphantom{U_n(C) A =}{}+ \big(q-q^{-1}\big)\beta \sum_{\ell \in \mathbb N} \lbrack n-2\ell\rbrack_q\big(q^{\ell-n}\lbrack \ell+1\rbrack_q-q^{n-\ell+1} \lbrack \ell \rbrack_q\big)U_{n-2\ell-1}(C)
\end{gather*}
and also
\begin{gather*}
U_n(C) B = q^{2n}B U_n(C) + q^{-2} \big(q-q^{-1}\big)B \sum_{\ell \in \mathbb N} \lbrack 2n-4\ell-2\rbrack_q U_{n-2\ell-2}(C)\\
\hphantom{U_n(C) B =}{} + q \big(q-q^{-1}\big) A\sum_{\ell \in \mathbb N} \lbrack 2n-4\ell\rbrack_q U_{n-2\ell-1}(C)\\
\hphantom{U_n(C) B =}{} + \big(q-q^{-1}\big)^2 \beta \sum_{\ell \in \mathbb N} \lbrack n-2\ell-1\rbrack_q \lbrack \ell+1 \rbrack_q \lbrack {n-\ell}\rbrack_q U_{n-2\ell-2}(C)\\
\hphantom{U_n(C) B =}{} - \big(q-q^{-1}\big)\alpha \sum_{\ell \in \mathbb N} \lbrack n-2\ell\rbrack_q \big(q^{n-\ell}\lbrack \ell+1\rbrack_q-q^{\ell-n-1} \lbrack \ell \rbrack_q\big)U_{n-2\ell-1}(C).
\end{gather*}
\end{Proposition}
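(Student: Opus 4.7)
The plan is to prove both formulas by induction on $n$, using the Chebyshev three-term recurrence $U_n(C) = CU_{n-1}(C) - U_{n-2}(C)$ of Definition~\ref{def:Tch}. I describe the argument for the first formula; the second follows by the same scheme after replacing the $CA$ commutator of Lemma~\ref{lem:CACB} by the companion identity $CB = q^{2}BC + q(q^2-q^{-2})A - q(q-q^{-1})\alpha$, which is the rewriting of~\eqref{eq:alpha} analogous to Lemma~\ref{lem:CACB}.

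The base case $n=1$ is a direct check: since $U_{-1}(C)=0$, the $A$- and $\alpha$-sums in the formula vanish, while the $\ell=0$ contributions of the $B$- and $\beta$-sums reproduce $CA$ via Lemma~\ref{lem:CACB}. For the inductive step ($n\geq 2$) write
\begin{gather*}
U_n(C)A = C \cdot U_{n-1}(C)A - U_{n-2}(C)A,
\end{gather*}
apply the induction hypothesis to both pieces on the right, and multiply the expansion of $U_{n-1}(C)A$ on the left by $C$. For each summand of that expansion use one of three rewriting rules: for $AU_k(C)$ summands, combine $CA = q^{-2}AC - q^{-1}(q^2-q^{-2})B + q^{-1}(q-q^{-1})\beta$ with $CU_k(C) = U_{k+1}(C)+U_{k-1}(C)$; for $BU_k(C)$ summands, use the companion $CB$ identity together with the same shift rule; and for summands with a central coefficient, the shift rule alone suffices. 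Collect everything in the five families $AU_m(C)$, $BU_m(C)$, $\alpha U_m(C)$, $\beta U_m(C)$ and match coefficients against the target formula. The $A$- and $B$-coefficient identities reduce, after a short reindexing, to Lemma~\ref{lem:brRec}; the $\beta$-coefficient identity is similar once the combination $q^{\ell-n}[\ell+1]_q - q^{n-\ell+1}[\ell]_q$ is simplified using~\eqref{eq:brack}.

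The main obstacle is the $\alpha$-coefficient. Setting $a=n-2\ell-1$, the four contributions to the coefficient of $\alpha U_{n-2\ell-2}(C)$ (namely: from the $B$-sum of $U_{n-1}(C)A$ acted on by the $CB$ rule; from the $\alpha$-sum of $U_{n-1}(C)A$ via each of the two outputs of the shift rule; and from the subtracted $\alpha$-sum of $U_{n-2}(C)A$) must combine to $(q-q^{-1})^2 [a]_q[\ell+1]_q[a+\ell+1]_q$. Using the bracket identity $[u]_q[v]_q - [u-1]_q[v-1]_q = [u+v-1]_q$, which is the substance of Lemma~\ref{lem:ident3}, the required equation reduces to $(q^\ell+q^{-\ell})[2a+\ell]_q = [2a]_q + [2a+2\ell]_q$, a direct consequence of~\eqref{eq:brack}. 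This closes the induction.
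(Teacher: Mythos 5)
Your proposal is correct and follows essentially the same route as the paper's proof: induction on $n$ driven by the three-term recurrence $CU_{n-1}(C)=U_n(C)+U_{n-2}(C)$, with the relations (\ref{eq:alpha}), (\ref{eq:beta}) (in the guise of the $CA$, $CB$ commutation rules) and the bracket identities of Lemmas \ref{lem:brRec} and \ref{lem:ident3} used to match coefficients. The only organizational difference is that the paper applies (\ref{eq:beta}) (resp.\ (\ref{eq:alpha})) once at the top level by left-multiplying it by $U_{n-1}(C)$, which couples the two formulas in a joint induction, whereas you commute $C$ through each summand of the inductive expansion and keep the two inductions separate; both versions work.
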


\begin{proof} We use induction on $n$. For $n=1$ the equations in the proposition statement are reformulations of (\ref{eq:alpha}), (\ref{eq:beta}). For $n\geq 2$ we proceed as follows. To obtain the first (resp.\ second) equation in the proposition statement, multiply each side of~(\ref{eq:beta}) (resp.~(\ref{eq:alpha})) on the left by $U_{n-1}(C)$, and evaluate the result using $ CU_{n-1}(C) =U_n(C)+U_{n-2}(C)$ along with induction and Lemmas~\ref{lem:brRec},~\ref{lem:ident3}.
\end{proof}

In the algebra $\mathcal O_q$ the elements $\lbrace B_{n\delta}\rbrace_{n=1}^\infty $ are defined using the formula (\ref{eq:Bdelta}). This formula is not symmetric in $\alpha_0$, $\alpha_1$. As shown in \cite{BK}, there is another formula for $\lbrace B_{n\delta}\rbrace_{n=1}^\infty $ that interchanges the roles of $\alpha_0$, $\alpha_1$. According to \cite[Section~5.2]{BK} the following holds in $\mathcal O_q$ for $n\geq 1$:
\begin{gather} \label{eq:Bdel2}
B_{n \delta} = q^{-2} B B_{(n-1)\delta+\alpha_0}- B_{(n-1)\delta+\alpha_0} B +\big(q^{-2}-1\big)\sum_{\ell=0}^{n-2} B_{\ell \delta+\alpha_0}B_{(n-\ell-2) \delta+\alpha_0}.
\end{gather}
We now sketch a proof of Theorem~\ref{thm:v3} that uses~(\ref{eq:Bdel2}) instead of~(\ref{eq:Bdelta}). Following~(\ref{eq:genf}), for the algebra~$\Delta_q$ we define
\begin{gather}
\tilde \Phi(t) = \sum_{n=0}^\infty t^n B_{n\delta+ \alpha_0}. \label{eq:genf2}
\end{gather}
By (\ref{eq:genf}), (\ref{eq:Bdel2}), (\ref{eq:genf2}) we obtain
\begin{gather}
\Psi(t) = q^{-2} t B\tilde \Phi(t) - t \tilde \Phi(t)B + \big(q^{-2}-1\big) t^2 \bigl(\tilde \Phi(t)\bigr)^2. \label{eq:BdeltaGF2}
\end{gather}
By (\ref{eq:pre1}), (\ref{eq:pre2}),
\begin{gather}
\frac{\lbrack \tilde \Phi(t),C \rbrack}{q-q^{-1}} = t^{-1} A + B + \big(t-t^{-1}\big) \tilde \Phi(t). \label{eq:comGF2}
\end{gather}
From the first equation in Theorem~\ref{thm:main1short} we obtain
\begin{gather}
\tilde \Phi(t) \bigl(q^{-1}t+q t^{-1} + C\bigr) = q t^{-1} A - q^{-1} B - \frac{\alpha}{t-t^{-1}}- \frac{t^{-1} \beta}{t-t^{-1}}.\label{eq:PhiC2}
\end{gather}
By (\ref{eq:comGF2}) and (\ref{eq:PhiC2}),
\begin{gather}
\bigl(q t+q^{-1} t^{-1} + C \bigr)\tilde \Phi(t) =q^{-1} t^{-1} A - q B - \frac{\alpha}{t-t^{-1}} - \frac{t^{-1} \beta}{t-t^{-1}}.\label{eq:CPhi2}
\end{gather}
In~(\ref{eq:BdeltaGF2}), we multiply each side on the left by $qt + q^{-1}t^{-1} +C$ and on the right by $q^{-1} t + qt^{-1} +C$. We evaluate the result using~(\ref{eq:PhiC2}),~(\ref{eq:CPhi2}) to obtain
\begin{gather*}
\bigl(q t + q^{-1} t^{-1} + C\bigr)\Psi(t)\bigl(q^{-1} t + q t^{-1} + C\bigr)\\
\qquad{} = q^{-2} t \bigl(q t + q^{-1} t^{-1} + C\bigr)B\left(q t^{-1} A - q^{-1} B - \frac{\alpha}{t-t^{-1}}- \frac{t^{-1} \beta}{t-t^{-1}}\right)\\
\qquad\quad{} -t \left( q^{-1} t^{-1} A - q B - \frac{\alpha}{t-t^{-1}}- \frac{t^{-1} \beta}{t-t^{-1}}\right) B\bigl(q^{-1} t + q t^{-1} + C\bigr)\\
\qquad\quad{} + \big(q^{-2}-1\big)t^2 \left(q^{-1} t^{-1} A - q B - \frac{\alpha}{t-t^{-1}}- \frac{t^{-1} \beta}{t-t^{-1}}\right)\\
\qquad\quad{}\times \left(q t^{-1} A - q^{-1} B - \frac{\alpha}{t-t^{-1}}
- \frac{t^{-1} \beta}{t-t^{-1}}\right).
\end{gather*}
Evaluating this equation using
\begin{gather*}
BA = q^2 AB+q\big(q^2-q^{-2}\big)C - q\big(q-q^{-1}\big) \gamma,\\
CB = q^{2} BC+q\big(q^2-q^{-2}\big)A - q\big(q-q^{-1}\big) \alpha,\\
CB^2 =q^{4} B^2C+ q^3(q^4-q^{-4}) AB - q^{2} \big(q^2-q^{-2}\big)B\alpha\\
\hphantom{CB^2 =}{} +q^4 \big(q^2-q^{-2}\big)^2 C -q^4\big(q-q^{-1}\big)\big(q^2-q^{-2}\big)\gamma
\end{gather*}
and
\begin{gather*}
ABC = q^{-1} \Omega- q A^2-q^{-3} B^2- q C^2+ A \alpha+ q^{-2} B \beta+ C\gamma,\\
CBA = q \Omega- q^{-1} A^2- q^{3} B^2- q^{-1} C^2+ A \alpha+ q^{2} B \beta+ C\gamma
\end{gather*}
we find that
\begin{gather*}
\bigl(q t + q^{-1} t^{-1} + C\bigr) \Psi(t)\bigl(q^{-1} t + q t^{-1} + C\bigr)\end{gather*}
is equal to $1-q^{-2}$ times
\begin{gather*}
 \Omega - \frac{\big(t+t^{-1}\big) \alpha \beta }{\big(t-t^{-1}\big)^2}-\frac{\alpha^2+ \beta^2}{\big(t-t^{-1}\big)^2}- \big(t+t^{-1}\big)\gamma+\big(q+q^{-1}\big)\big(t+t^{-1}\big) C + C^2.
\end{gather*}
After this point, the present proof is the same as the original proof.

\subsection*{Acknowledgements}
The author thanks Pascal Baseilhac and Samuel Belliard for giving this paper a close reading and offering valuable suggestions.

\pdfbookmark[1]{References}{ref}
\LastPageEnding

\end{document}